\documentclass[a4paper]{amsart}
\usepackage{amsmath,amsthm,amssymb,latexsym,epic,bbm,comment,mathrsfs}
\usepackage{graphicx,enumerate,stmaryrd,color,tikz,ytableau}
\usepackage[all,2cell]{xy}
\xyoption{2cell}
\usetikzlibrary{patterns,snakes}

\newtheorem{theorem}{Theorem}
\newtheorem{lemma}[theorem]{Lemma}
\newtheorem{corollary}[theorem]{Corollary}
\newtheorem{proposition}[theorem]{Proposition}

\newtheorem{remark}[theorem]{Remark}
\usepackage[all]{xy}
\usepackage[active]{srcltx}
\usepackage[parfill]{parskip}
\usepackage{enumerate}

\begin{document}
\title[A stability phenomenon in Kazhdan-Lusztig combinatorics]
{A stability phenomenon in \\ Kazhdan-Lusztig combinatorics}

\author[Samuel Creedon and Volodymyr Mazorchuk]
{Samuel Creedon and Volodymyr Mazorchuk}

\begin{abstract}
We prove that, when $n$ goes to infinity, the expression,
with respect to the dual Kazhdan-Lusztig basis, of
the product $\hat{\underline{H}}_x\underline{H}_y$
of elements of the dual and the usual Kazhdan-Lusztig bases
in the Hecke algebra of the symmetric group
$S_n$ stabilizes. As an application, we define the
action of projective functors on the principal block 
of category $\mathcal{O}$  for $\mathfrak{sl}_\infty$
and show that the subcategory of finite length objects 
is stable under this action. As a bonus, we also prove that this
latter block is Koszul, answering, for this block,  
a question from \cite{CP}.
\end{abstract}

\maketitle

\section{Introduction and description of the results}\label{s1}

Let $\mathfrak{g}$ be a semi-simple complex Lie algebra. 
To a fixed triangular decomposition 
\begin{displaymath}
\mathfrak{g}=\mathfrak{n}_-\oplus
\mathfrak{h}\oplus\mathfrak{n}_+ 
\end{displaymath}
of $\mathfrak{g}$, we can associate 
the corresponding Bernstein-Gelfand-Gelfand category $\mathcal{O}$, see \cite{BGG,Hu}.
The principal block $\mathcal{O}_0$ of $\mathcal{O}$ is equivalent to 
the category of finite dimensional modules over some finite
dimensional associative algebra. This block is equipped with 
an action of the  monoidal category $\mathscr{P}$ of projective
endofunctors, see \cite{BG}. 

Both $\mathcal{O}_0$ and $\mathscr{P}$
admit $\mathbb{Z}$-graded lifts, ${}^{\mathbb{Z}}\mathcal{O}_0$ and 
${}^{\mathbb{Z}}\mathscr{P}$, respectively. Taking the Gro\-then\-di\-eck group, 
resp. ring, of the two latter categories, one obtains the right regular
representation of the Hecke algebra $\mathbf{H}$ associated to the
Weyl group $W$ of $\mathfrak{g}$. 
In this picture, the Verma modules in $\mathcal{O}_0$ correspond to the
standard basis $\{H_w\,:\,w\in W\}$ of the Hecke algebra, the indecomposable 
projective modules (as well as the indecomposable projective functors) 
correspond to the  Kazhdan-Lusztig basis $\{\underline{H}_w\,:\,w\in W\}$, 
and the simple modules correspond to the dual Kazhdan-Lusztig basis 
$\{\underline{\hat{H}}_w\,:\,w\in W\}$. Consequently, the
Kazhdan-Lusztig combinatorics of  $\mathbf{H}$ becomes a very useful tool to study 
$\mathcal{O}_0$.

One particular representation-theoretic question for which 
Kazhdan-Lusztig combinatorics has shown to be very useful is 
the classical Kostant's problem for simple highest weight modules,
see \cite{Jo}. The original question asks whether the universal
enveloping algebra surjects onto the algebra of adjointly locally
finite endomorphisms of a module. In \cite{KMM}, a significant part
of this question was reduced to the study of the properties of 
the elements in $\mathbf{H}$ that have the form 
$\underline{\hat{H}}_x\underline{H}_y$. At the level of 
$\mathcal{O}_0$, these elements of $\mathbf{H}$ correspond to 
the outcome of applying the indecomposable projective functor
$\theta_y$ to the simple highest weight module $L_x$.

During our recent work on various aspects of Kostant's problem
for simple highest weight $\mathfrak{sl}_n$-modules in \cite{CM1,CM2}, we 
computed, using SageMath and GAP3, a lot of products of the form 
$\underline{\hat{H}}_x\underline{H}_y$ in small ranks in the case
of the Hecke algebra of the symmetric group $S_n$, with the 
outcome expressed as linear combinations of the elements of the
dual KL basis. Not directly related to the questions we studied
in \cite{CM1,CM2}, we made a bonus observation that, for fixed 
$x$ and $y$, the expression of $\underline{\hat{H}}_x\underline{H}_y$
seems to stabilize when the rank increases. Let us directly point 
out that  this is not an obvious phenomenon. The transformation 
matrix between the two bases changes very drastically with the
rank due to the fact that the duality between the two bases 
swaps the identity $e$ of $W$ with the longest element $w_0$.
And the element $w_0$ changes significantly with the rank.
In particular, the expression $\underline{\hat{H}}_x\underline{H}_y$
itself does depend on the rank, but this dependence seems to stabilize.
For comparison, the expression $\underline{{H}}_x\underline{H}_y$,
when written in the KL basis, is independent of the rank.

The main result of the present paper, Theorem~\ref{mainthm}, 
confirms this phenomenon: given two elements
$x,y\in S_n\subset S_k$, for $k\geq n$, we show that 
the expression $\underline{\hat{H}}_x\underline{H}_y$
with respect to the dual KL basis of $S_k$ stabilizes for $k\gg 0$.
As an application, in Theorem~\ref{thm77}, we show that one can
define an action of projective functors on the category of finite
length modules in the principal block of the BGG category
$\mathcal{O}$ for the algebra $\mathfrak{sl}_\infty$.

To prove the main result, we employ the Robinson-Schensted
correspondence for the symmetric group, \cite{Sch},
and the properties of the Kazhdan-Lusztig $\mu$-function 
listed in \cite{Wa}. For our application, we heavily use the properties 
of the restricted inverse limits of categories, described in \cite{EA15}.

The paper is organized as follows: In Section~\ref{s2} we collected
all preliminaries necessary for the formulation of our main result,
Theorem~\ref{mainthm}, which can be found in Subsection~\ref{s2.4}.
This main theorem is proved in Section~\ref{s3}. Finally,
Section~\ref{s4} details the application to projective functors
on category $\mathcal{O}$ for $\mathfrak{sl}_\infty$.
As a bonus, we prove an extension vanishing property for 
simple modules in the principal block of 
$\mathcal{O}$ for $\mathfrak{sl}_\infty$, answering (in this particular case)
a question from  \cite{CP}.
\vspace{5mm}

\subsection*{Acknowledgements}

The first author is partially supported by Vergstiftelsen.
The second author is partially supported by the Swedish Research Council.
\vspace{5mm}

\section{Hecke algebra of the symmetric group}\label{s2}

\subsection{Hecke algebra}\label{s2.1}

For $n\in\mathbb{Z}_{>0}$, consider the {\em symmetric group}
$S_n$ and its {\em Hecke algebra} $\mathbf{H}_n$. The latter 
is a $\mathbb{Z}[v,v^{-1}]$-algebra with {\em standard basis}
$\{H^{(n)}_w\,:\,w\in S_n\}$ and multiplication uniquely determined
by
\begin{equation}\label{eq0000}
H^{(n)}_{s} H^{(n)}_w=
\begin{cases}
H^{(n)}_{sw},& \text{ if } \ell(sw)>\ell(w);\\
H^{(n)}_{sw}+ (v^{-1}-v)H^{(n)}_w, & \text{ if } \ell(sw)<\ell(w);
\end{cases}
\end{equation}
where $s$ is an elementary transposition, $w\in S_n$
and $\ell$ denotes the length function. Note that
$H^{(n)}_e$ is the unit element of this algebra and that
$(H^{(n)}_s)^2=H^{(n)}_e+(v^{-1}-v)H^{(n)}_s$, for any elementary transposition $s$.
Consequently, all $H^{(n)}_s$, and hence all $H^{(n)}_w$ as well,
are invertible elements of $\mathbf{H}_n$.

We can view the algebra $\mathbf{H}_n$ as a subalgebra of $\mathbf{H}_{n+1}$
by sending $H^{(n)}_w$ to $H^{(n+1)}_w$, for $w\in S_n$.

\subsection{Kazhdan-Lusztig basis}\label{s2.2}

The {\em bar involution} on $\mathbf{H}_n$, denoted
$h\mapsto \overline{h}$, is  defined
by sending $v$ to $v^{-1}$ and $H^{(n)}_s$ to $(H^{(n)}_s)^{-1}$.
The algebra $\mathbf{H}_n$ has, see \cite{KL,So}, the
{\em Kazhdan-Lusztig basis} $\{\underline{H}^{(n)}_w\,:\,w\in S_n\}$ 
which is uniquely determined by the properties
\begin{displaymath}
\overline{\underline{H}^{(n)}_w}=\underline{H}^{(n)}_w
\qquad\text{ and }\qquad
\underline{H}^{(n)}_w= H^{(n)}_w+\sum_{x\in S_n}
v\mathbb{Z}[v]H^{(n)}_x.
\end{displaymath}

The entries of the transformation matrix between the
two bases are known as the {\em Kazhdan-Lusztig polynomials}
$\{p^{(n)}_{x,y}\,:\, x,y\in S_n\}$, that is, for $x,y\in S_n$, we have:
\begin{equation}\label{eq0001}
\underline{H}^{(n)}_y=\sum_{x\in S_n} p^{(n)}_{x,y}H^{(n)}_x.
\end{equation}
Note that  these polynomials do not really depend on $n$ in the
sense that, for $x,y\in S_n$, we have $p^{(n)}_{x,y}=p^{(n+1)}_{x,y}$.
Also, note that $p^{(n)}_{x,y}\neq 0$ implies that $x\leq y$ with respect to 
the Bruhat order on $S_n$. In particular, this implies that our realization of
$\mathbf{H}_n$ as a subalgebra of $\mathbf{H}_{n+1}$ is compatible with
the KL basis, i.e.  $\underline{H}^{(n)}_w=\underline{H}^{(n+1)}_w$, for $w\in S_n$.

\subsection{Dual Kazhdan-Lusztig basis}\label{s2.3}

We have a symmetric $\mathbb{Z}[v,v^{-1}]$-bilinear
form on $\mathbf{H}_n$ defined, for $x,y\in S_n$, via
\begin{displaymath}
(H^{(n)}_x,H^{(n)}_y)=\delta_{x,y}. 
\end{displaymath}
With respect to this form, we have the 
{\em  dual Kazhdan-Lusztig basis} 
$\{\hat{\underline{H}}^{(n)}_w\,:\,w\in S_n\}$
(see e.g. \cite[Subsection~7.1]{MS}), that is,
for $x,y\in S_n$, we have 
\begin{equation}\label{eqform}
({\underline{H}}^{(n)}_x,\hat{\underline{H}}^{(n)}_y)=\delta_{x,y}. 
\end{equation}

\subsection{Main result}\label{s2.4}

For $h\in \mathbf{H}_n$ and $w\in S_n$, we define 
$[h:\underline{\hat{H}}^{(n)}_w]\in \mathbb{Z}[v,v^{-1}]$ 
to be the coefficient of $\underline{\hat{H}}^{(n)}_w$ in  $h$ when expressed in the
dual Kazhdan-Lusztig basis, i.e.  
\begin{displaymath}
h=\sum_{w\in S_n} [h:\underline{\hat{H}}^{(n)}_w] \hat{\underline{H}}^{(n)}_w.
\end{displaymath}

\begin{theorem}\label{mainthm}
\begin{enumerate}[$($a$)$]
\item\label{mainthm.1}
For all $n\geq 1$, all $x,y,w\in S_n$ and all $k,m>n$, we have
\begin{displaymath}
[\hat{\underline{H}}^{(k)}_x{\underline{H}}^{(k)}_y:\underline{\hat{H}}^{(k)}_w]=
[\hat{\underline{H}}^{(m)}_x{\underline{H}}^{(m)}_y:\underline{\hat{H}}^{(m)}_w]. 
\end{displaymath}
\item\label{mainthm.2}
For any $n\geq 1$, all $x,y\in S_n$, all $k>2^{\ell(y)}n$ 
and all $w\in S_{k}$, we have that the inequality 
$[\hat{\underline{H}}^{(k)}_x{\underline{H}}^{(k)}_y:\underline{\hat{H}}^{(k)}_w]\neq 0$
implies $w\in S_{2^{\ell(y)}n}$.
\end{enumerate}
\end{theorem}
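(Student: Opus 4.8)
The plan is to understand how to multiply $\hat{\underline{H}}_x$ by $\underline{H}_y$ in terms of the dual KL basis, and to track the support of the result. The key structural tool must be the pairing \eqref{eqform}: the coefficient $[\hat{\underline{H}}^{(k)}_x\underline{H}^{(k)}_y:\underline{\hat{H}}^{(k)}_w]$ equals $(\underline{H}^{(k)}_w,\hat{\underline{H}}^{(k)}_x\underline{H}^{(k)}_y)$, so everything comes down to understanding products of KL basis elements with dual KL basis elements through this bilinear form. I expect the cleanest route is to reduce multiplication by $\underline{H}_y$ to multiplication by the generators $\underline{H}_s$ (for elementary transpositions $s$), since $\underline{H}_y$ is built from such factors up to lower-order terms, and $\ell(y)$ controls how many factors appear. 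This is presumably where the factor $2^{\ell(y)}$ in part~(b) originates: each multiplication by a single $\underline{H}_s$ can at most double the relevant support, so $\ell(y)$ multiplications give a bound of the form $2^{\ell(y)}$.

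Let me sketch the single-generator step, which is the heart of the argument. Multiplying a dual KL basis element $\hat{\underline{H}}_w$ on the right by $\underline{H}_s$ should, by standard KL theory, produce $\hat{\underline{H}}_{ws}$ (or $\hat{\underline{H}}_w$ times a scalar) plus a correction supported on elements $u$ with $\mu(u,w)\neq 0$, where $\mu$ is the Kazhdan-Lusztig $\mu$-function — exactly the data the introduction says is controlled via \cite{Wa}. The crucial observation for \emph{stability} (part~(a)) is that the relevant $\mu$-values and the structure constants they produce are, by the remarks following \eqref{eq0001}, independent of the ambient rank: since $\underline{H}^{(n)}_w=\underline{H}^{(n+1)}_w$ and the KL polynomials agree across ranks, the only thing that could break rank-independence is the dual basis, whose defining pairing is rank-stable away from the behavior of $w_0$. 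So the plan for part~(a) is to express $\hat{\underline{H}}_x\underline{H}_y$ as a sum over a sequence of single-generator multiplications, show each resulting structure constant is computed purely from $\mu$-values among permutations in a fixed finite set (determined by $x$, $y$ and the Robinson-Schensted cells they lie in), and invoke rank-independence of these $\mu$-values to conclude the coefficients do not change once $k,m>n$.

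For part~(b), the bound on the support, I would argue inductively on a reduced expression $y=s_{i_1}\cdots s_{i_{\ell(y)}}$. Starting from $\hat{\underline{H}}_x$, which is supported on the single element $x\in S_n$, each right multiplication by a generator $\underline{H}_{s}$ sends a basis element $\hat{\underline{H}}_u$ to a combination of $\hat{\underline{H}}_{us}$ (a Bruhat neighbour, hence close to $u$) and correction terms indexed by $\mu$-related elements. The content of the argument is to show that this operation cannot push the support outside $S_{2n}$ in a single step — more precisely that all newly appearing indices lie in $S_{2N}$ whenever the current support lies in $S_N$ — so that after $\ell(y)$ steps the support is contained in $S_{2^{\ell(y)}n}$. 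This should follow from the combinatorics of how $\mu(u,w)\neq 0$ constrains $u$ relative to $w$ together with the behaviour of the Bruhat order under multiplication by a transposition, using the Robinson-Schensted description of cells to pin down which permutations can receive nonzero coefficients.

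\textbf{The main obstacle} I anticipate is the single-generator multiplication formula for the \emph{dual} KL basis: unlike the ordinary KL basis, where $\underline{H}_s\underline{H}_w$ has a clean, well-documented expansion in terms of $\mu$-values, the rule for $\hat{\underline{H}}_u\underline{H}_s$ must be derived by dualizing through the form \eqref{eqform}, and one must verify carefully that the resulting correction terms are governed by rank-independent $\mu$-values and stay within the claimed support. Controlling the support growth tightly enough to get exactly the factor $2^{\ell(y)}$ (rather than a weaker exponential bound) will require a precise handle on which Bruhat-distance and $\mu$-function relations can occur at each step; this is where I expect the properties collected from \cite{Wa} together with the Robinson-Schensted correspondence of \cite{Sch} to do the real work.
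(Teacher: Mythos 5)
Your skeleton for part (b) does match the paper's: reduce to the case where $y=s$ is a simple reflection via a reduced expression and induction on $\ell(y)$, use the expansion \eqref{eq2} of $\hat{\underline{H}}_x\underline{H}_s$ governed by the $\mu$-function, and prove a one-step doubling bound on the support. But that doubling bound is the entire mathematical content of part (b), and it is exactly what you do not prove; moreover, your stated reason for it (``how $\mu(u,w)\neq 0$ constrains $u$ relative to $w$'') is false as stated. Nonvanishing of $\mu$ alone puts \emph{no} bound on $\mathbf{m}(u)$ in terms of $\mathbf{m}(x)$: for every $M>\mathbf{m}(x)$ one has the covering relation $x\prec xs_M$ with $\mu(x,xs_M)=1$, and $\mathbf{m}(xs_M)=M+1$ is arbitrarily large. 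What saves the argument is the extra descent condition built into \eqref{eq2}: the correction terms are indexed by $u$ with $\ell(us)>\ell(u)$ while $\ell(xs)<\ell(x)$, i.e. $s\in\mathbf{rds}(x)\setminus\mathbf{rds}(u)$. The paper isolates precisely this in Lemma~\ref{lem-2}: $\mu(a,b)\neq 0$ together with $\mathbf{rds}(a)\setminus\mathbf{rds}(b)\neq\varnothing$ forces $\mathbf{m}(b)\leq 2\mathbf{m}(a)$, and its proof is not routine: the covering case uses a commutation argument (if $\mathbf{m}(b)>2\mathbf{m}(a)$ then $b=as_{\mathbf{m}(b)-1}$ and $s_{\mathbf{m}(b)-1}$ commutes with every simple reflection occurring in $a$), while the case $\ell(b)-\ell(a)>1$ combines Warrington's criterion $\mathbf{lds}(b)\subset\mathbf{lds}(a)$, $\mathbf{rds}(b)\subset\mathbf{rds}(a)$ with the Robinson--Schensted tableaux and the dominance order to reach a contradiction. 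Incidentally, the obstacle you flag as the main one---deriving the single-generator rule for the dual basis---is not an obstacle at all: that rule is standard Kazhdan--Lusztig theory and is precisely \eqref{eq2}.

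For part (a) your route is both different from the paper's and deficient. First, it is circular as organized: the ``fixed finite set'' of permutations through which all intermediate computations pass is exactly the support control of part (b), so (a) cannot be run before (b) is established. Second, even granting (b), iterating \eqref{eq2} only shows that the coefficients agree once $k,m\geq 2^{\ell(y)}n$, whereas the theorem asserts equality for \emph{all} $k,m>n$; for intermediate ranks you would additionally need to show that the terms indexed by elements of $S_m\setminus S_k$, which occur at rank $m$ but not at rank $k$, never feed back into the coefficient of a fixed $w\in S_n$ under the subsequent multiplications---and Lemma~\ref{lem-2} bounds $\mathbf{m}$ from above, not from below, so it does not rule this out. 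The paper avoids all of this with a two-line argument that you circle around (you invoke the pairing \eqref{eqform}) but never use: by the standard invariance property of the form, $(\hat{\underline{H}}^{(k)}_x\underline{H}^{(k)}_y,\underline{H}^{(k)}_w)=(\hat{\underline{H}}^{(k)}_x,\underline{H}^{(k)}_{y^{-1}}\underline{H}^{(k)}_w)$, so the coefficient in question equals the coefficient of $\underline{H}^{(k)}_x$ in $\underline{H}^{(k)}_{y^{-1}}\underline{H}^{(k)}_w$, a Kazhdan--Lusztig structure constant, which is manifestly independent of $k$ because the embeddings $\mathbf{H}_n\subset\mathbf{H}_{n+1}$ are compatible with the KL basis. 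This gives (a) in full strength, for all $k,m>n$, with no induction and no $\mu$-function at all.
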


\subsection{Example}\label{s2.5}
Denote the transposition $(i,i+1)$ by $s_i$. Then we have:
\begin{displaymath}
\hat{\underline{H}}^{(4)}_{s_1s_2s_3s_1s_2s_1}{\underline{H}}^{(4)}_{s_3}=
\hat{\underline{H}}^{(4)}_{s_2s_1s_3s_2s_1}+
(v+v^{-1})\hat{\underline{H}}^{(4)}_{s_1s_2s_3s_1s_2s_1}
\end{displaymath}
and, for all $m>4$, we have 
\begin{displaymath}
\hat{\underline{H}}^{(m)}_{s_1s_2s_3s_1s_2s_1}{\underline{H}}^{(m)}_{s_3}=
\hat{\underline{H}}^{(m)}_{s_2s_1s_3s_2s_1}+
\hat{\underline{H}}^{(m)}_{s_1s_2s_3s_1s_2s_1s_4}+
(v+v^{-1})\hat{\underline{H}}^{(m)}_{s_1s_2s_3s_1s_2s_1}.
\end{displaymath}
The latter is easy to check, using GAP3, for $m\leq 8$.

\section{Proof of Theorem~\ref{mainthm}}\label{s3}

\subsection{Proof of Claim~\eqref{mainthm.1}}\label{s3.0}

Using \eqref{eqform}, we can interpret the value
$[\hat{\underline{H}}^{(k)}_x{\underline{H}}^{(k)}_y:\underline{\hat{H}}^{(k)}_w]$
as $(\hat{\underline{H}}^{(k)}_x{\underline{H}}^{(k)}_y,{\underline{H}}^{(k)}_w)$.
The latter equals 
$(\hat{\underline{H}}^{(k)}_x,{\underline{H}}^{(k)}_{y^{-1}}{\underline{H}}^{(k)}_w)$
and hence coincides with the multiplicity of ${\underline{H}}^{(k)}_x$
in ${\underline{H}}^{(k)}_{y^{-1}}{\underline{H}}^{(k)}_w$. Since the
canonical embeddings of the Hecke algebras are compatible with the KL basis, 
this multiplicity does not depend on $k$.

It remains to prove Claim~\eqref{mainthm.2}.

\subsection{Reduction to simple reflections}\label{s3.1}

As a first step, we claim that it is enough to prove Theorem~\ref{mainthm}
under the assumption that $y=s$ is a simple reflection.

For this, consider the structure constants of $\mathbf{H}_n$
with respect to the Kazhdan-Lusztig basis:
\begin{displaymath}
{\underline{H}}^{(n)}_a{\underline{H}}^{(n)}_b=
\sum_{c\in S_n}\gamma_{a,b}^c {\underline{H}}^{(n)}_c.
\end{displaymath}
Note that $\gamma_{a,b}^c\in\mathbb{Z}[v,v^{-1}]$ does not depend on $n$.
From \cite[Theorem~2]{So} it follows that
each $\gamma_{a,b}^c$ has
non-negative integer coefficients. Furthermore, if 
$y=s_1s_2\dots s_r$ is a reduced expression, then
${\underline{H}}^{(n)}_y$ appears with a non-zero coefficient
in the decomposition of the product
\begin{equation}\label{eq1}
{\underline{H}}^{(n)}_{s_1}{\underline{H}}^{(n)}_{s_2}
\dots{\underline{H}}^{(n)}_{s_r}. 
\end{equation}

If Theorem~\ref{mainthm} is proved under the assumption that 
$y=s$ is a simple reflection, then, using distributivity
of algebraic operations
and induction on $r$, Claim~\eqref{mainthm.2} of 
Theorem~\ref{mainthm} follows in the general case.

Therefore, in the remainder of this section, we will prove
Claim~\eqref{mainthm.2} of Theorem~\ref{mainthm} under the assumption that 
$y=s$ is a simple reflection.

\subsection{The $\mu$-function}\label{s3.2}

For $x\in S_n$ and a simple reflection $s$, recall that 
\begin{equation}\label{eq2}
\hat{\underline{H}}^{(n)}_x {\underline{H}}^{(n)}_{s}=
\begin{cases}
\displaystyle
(v+v^{-1})\hat{\underline{H}}^{(n)}_x +
\sum_{u\in S_n\,:\, \ell(us)>\ell(u)}\mu^{(n)}(x,u)\hat{\underline{H}}^{(n)}_u,
& \text{ if } \ell(xs)<\ell(x);\\
0,& \text{ if } \ell(xs)>\ell(x). 
\end{cases}
\end{equation}
Here $\mu^{(n)}(x,u)\in\mathbb{Z}_{\geq 0}$ is the value of the 
{\em Kazhdan-Lusztig $\mu$-function}, see \cite{KL}. Directly
from the definition of $\mu$ it follows that, for 
$x,u\in S_n\subset S_{n+1}$, we have 
$\mu^{(n)}(x,u)=\mu^{(n+1)}(x,u)$. Therefore, from now
on, we will drop the superscript $(n)$ for $\mu$.

We need to recall some basic properties of the $\mu$-function.
Denote by $\prec$ the Bruhat order on $S_n$.
For $w\in S_n$, denote by $\mathbf{lds}(w)$ and $\mathbf{rds}(w)$
the {\em left and right descent sets} of $w$, respectively.
This means that $\mathbf{lds}(w)$ consists of all simple reflections
$t\in S_n$ such that $tw\prec w$. Also, $\mathbf{rds}(w)$ consists 
of all simple reflections $t\in S_n$ such that $wt\prec w$.
The function $\mu$ has the following properties, see \cite{KL,BB,Wa}:
\begin{itemize}
\item $\mu(x,u)\neq 0$ only if $x\prec u$ or $u\prec x$;
\item $\mu(x,u)=\mu(u,x)$;
\item $\mu(x,u)=1$ if $x\prec u$ and $\ell(x)=\ell(u)-1$;
\item if $x\prec u$ and $\ell(x)<\ell(u)-1$, then $\mu(x,u)=0$
unless $\mathbf{lds}(u)\subset \mathbf{lds}(x)$ and
$\mathbf{rds}(u)\subset \mathbf{rds}(x)$.
\end{itemize}

\subsection{Cell combinatorics}\label{s3.3}

For two elements $a,b\in S_n$, we write $a\geq_L b$ provided that there
exists $c\in S_n$ such that ${\underline{H}}^{(n)}_a$ appears with a 
non-zero coefficient when expressing ${\underline{H}}^{(n)}_c{\underline{H}}^{(n)}_b$
in the KL-basis. This defines the so-called {\em left pre-order} on $S_n$ and the corresponding
equivalence classes ($a\sim_L b$ if and only if $a\geq_L b$ and $b\geq_L a$) are
called {\em left cells}. The {\em right pre-order} $\geq_R$ and the
{\em right cells} ($\sim_R$) are defined similarly using 
${\underline{H}}^{(n)}_b{\underline{H}}^{(n)}_c$.
The {\em two-sided pre-order} $\geq_J$ and the
{\em two-sided cells} ($\sim_J$) are defined similarly using 
${\underline{H}}^{(n)}_{c_1}{\underline{H}}^{(n)}_b{\underline{H}}^{(n)}_{c_2}$.
Note that the identity $e\in S_n$ is the minimum element with respect to all
three pre-orders while the longest element $w_0\in S_n$ is the maximum 
element with respect to all three pre-orders.

In \cite[Section~5]{KL}, one can find the following combinatorial description
of cells. Recall the classical {\em Robinson-Schensted correspondence} 
\begin{displaymath}
S_n\overset{1:1}{\leftrightarrow}
\coprod_{\lambda\vdash n}\mathbf{SYT}_\lambda\times \mathbf{SYT}_\lambda
\end{displaymath}
that provides a bijection between $S_n$ and pairs of {\em standard Young tableaux}
of the same shape (the shape itself is a partition of $n$). We denote
the map from the left hand side to the right hand side of the above 
by $\mathbf{RS}$ and write $\mathbf{RS}(w)=(P_w,Q_w)$, where $P_w$
is the {\em insertion tableau} and $Q_w$ is the {\em recording tableau}, see \cite{Sch,Sa}.
Then, for $a,b\in S_n$, we have
\begin{itemize}
\item $a\sim_L b$ if and only if $Q_a=Q_b$;
\item $a\sim_R b$ if and only if $P_a=P_b$;
\item $a\sim_J b$ if and only if the shape of $P_a$ is the same as the shape of $P_b$.
\end{itemize}
Consequently, for $a,b\in S_n\subset S_{n+1}$, we have 
$a\sim_L b$ in $S_n$ if and only if $a\sim_L b$ in $S_{n+1}$
and similarly for the right and two-sided cells. Hence there is no 
confusion that we do not use any superscripts in the notation.

One important property of the cell combinatorics is given by Lemma~13(a) of
\cite{MM1}: for $a,b,c\in S_n$, the fact that 
$[\hat{\underline{H}}^{(n)}_a{\underline{H}}^{(n)}_b:\hat{\underline{H}}^{(n)}_c]\neq 0$
implies $c\leq_R a$.

The rows of Young tableaux are numbered from top to bottom and 
the columns of Young tableaux are numbered from left to right.
It is important that the descent sets of $w$ can be directly read off 
from the tableaux $P_w$ and $Q_w$. We have, see \cite{Sa}:
\begin{itemize}
\item $s_i=(i,i+1)\in\mathbf{lds}(w)$ if and only if $i+1$ appears
in $P_w$ in a row with strictly higher number than the 
number of the row of $P_w$ containing $i$;
\item $s_i=(i,i+1)\in\mathbf{rds}(w)$ if and only if $i+1$ appears
in $Q_w$ in a row with strictly higher number than the number 
of  the row of $Q_w$ containing $i$.
\end{itemize}

Finally, we note that, for $w\in S_n\subset S_{n+1}$, the
$S_{n+1}$-insertion tableau for $w$ is obtained from 
the $S_{n}$-insertion tableau for $w$ by adding a new box with $n+1$ in the first row.
Similarly for the recording tableau.

In \cite{Ge}, it is shown that the order $\leq_J$ on the set of all
partitions of $n$ coincides with the (opposite of the) dominance order.
We note that no combinatorial interpretations of $\leq_L$ (or $\leq_R$)
are known. This is a major obstacle for many arguments. 
For some partial results, see \cite{HHS}.

\subsection{Stability for the $\mu$-function}\label{s3.4}

For $w\in S_n$, denote by $\mathbf{m}(w)$ the maximal positive integer $k$
such that $w(k)\neq k$ (we set $\mathbf{m}(e)=0$).
The main observation that we need for 
our proof of Theorem~\ref{mainthm} is the following:

\begin{lemma}\label{lem-2}
Let $a,b\in S_n$ be such that $\mu(a,b)\neq 0$ and
$\mathbf{rds}(a)\setminus \mathbf{rds}(b)\neq \varnothing$.
Then  $\mathbf{m}(b)\leq 2\mathbf{m}(a)$.
\end{lemma}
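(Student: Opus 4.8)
The plan is to combine the four listed properties of the $\mu$-function with elementary one-line-notation combinatorics. I first record a basic observation: every right descent of a permutation $w$ occurs at a position $\le\mathbf{m}(w)-1$, since for $i\ge\mathbf{m}(w)$ one has $w(i)\le i<i+1=w(i+1)$; as $\mathbf{m}(w^{-1})=\mathbf{m}(w)$ and the left descents of $w$ are the right descents of $w^{-1}$, the same bound holds for left descents. Next I reduce to two configurations. Since $\mu(a,b)\ne 0$, the elements $a$ and $b$ are Bruhat comparable, and the hypothesis reads $\mathbf{rds}(a)\not\subseteq\mathbf{rds}(b)$. If $b\prec a$ with $\ell(a)-\ell(b)\ge 2$, the fourth property of $\mu$ (with $x=b$, $u=a$) gives $\mathbf{rds}(a)\subseteq\mathbf{rds}(b)$, contradicting the hypothesis; hence this case is impossible. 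The surviving cases are \textbf{(I)} a covering pair, $|\ell(a)-\ell(b)|=1$, and \textbf{(II)} $a\prec b$ with $\ell(b)-\ell(a)\ge 2$, where the same property yields $\mathbf{rds}(b)\subseteq\mathbf{rds}(a)$ and $\mathbf{lds}(b)\subseteq\mathbf{lds}(a)$.

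In case \textbf{(II)}, put $m=\mathbf{m}(a)$ and fix $N$ with $a,b\in S_N$. Combining the two containments with the observation, both the largest right-descent index and the largest left-descent index of $b$ are $\le m-1$. Consequently $b$ is strictly increasing on positions $m,m+1,\dots,N$, and the values $m,m+1,\dots,N$ occupy strictly increasing positions in $b$. Set $m_b=\mathbf{m}(b)$ and assume $m_b\ge m$ (otherwise $m_b\le m-1\le 2m$ and we are done). A short count in the increasing tail shows $b^{-1}(m_b)\le m-1$: if instead $b^{-1}(m_b)\ge m$, then the positions to the right of $b^{-1}(m_b)$ would have to carry exactly the values exceeding $m_b$, forcing $b^{-1}(m_b)\ge m_b$ and contradicting $b^{-1}(m_b)<m_b$. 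Finally, the values exceeding the largest left-descent index of $b$ occupy strictly increasing positions; restricting to those up to $m_b$, which terminate at position $b^{-1}(m_b)\le m-1$, and recalling that the largest left-descent index is itself $\le m-1$, we obtain $m_b\le(m-1)+(m-1)=2m-2\le 2\mathbf{m}(a)$.

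In case \textbf{(I)}, write the cover as $b=a\,(p,q)$ with $p<q$, so the one-line notations of $a$ and $b$ differ only by the swap of positions $p$ and $q$. If $b\prec a$, then a swapped position $q>\mathbf{m}(a)$ would, through the values fixed by $a$ beyond $\mathbf{m}(a)$, force $p>q$, which is absurd; hence $q\le\mathbf{m}(a)$ and $\mathbf{m}(b)\le\mathbf{m}(a)$. If $a\prec b$, then $a(p)<a(q)$, and the hypothesis provides a right descent of $a$ that becomes an ascent in $b$; matching this flipped step to $\{p,q\}$ locates the swap. The decisive input is the covering (no-intermediate-entry) condition: were $q>\mathbf{m}(a)+1$, then position $\mathbf{m}(a)+1$ would lie strictly between $p$ and $q$ and carry the fixed value $\mathbf{m}(a)+1$, which is strictly between $a(p)$ and $a(q)$---an intermediate entry forbidden by covering. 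Thus $q\le\mathbf{m}(a)+1$; since $b$ agrees with $a$ at all positions exceeding $q$ and $a$ fixes all positions exceeding $\mathbf{m}(a)$, we obtain $\mathbf{m}(b)\le\max(\mathbf{m}(a),q)\le\mathbf{m}(a)+1\le 2\mathbf{m}(a)$.

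The main obstacle is case \textbf{(II)}: the real content is turning the two \emph{one-sided} bounds---largest left-descent and largest right-descent indices of $b$, each $\le\mathbf{m}(a)-1$---into a single bound on $\mathbf{m}(b)$, and it is precisely their \emph{sum} that produces the factor of $2$. Case \textbf{(I)} needs separate handling because the descent containments of case (II) fail for covers, and this is exactly where the hypothesis $\mathbf{rds}(a)\setminus\mathbf{rds}(b)\ne\varnothing$ is indispensable: without it one has runaway covers such as $a=s_1$, $b=s_1s_k$, for which $\mathbf{m}(b)=k+1$ is unbounded. A final sanity check against the Example of Subsection~\ref{s2.5}, where a covering term with $\mathbf{m}(b)=\mathbf{m}(a)+1$ occurs, shows that the case (I) bound is sharp.
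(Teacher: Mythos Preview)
Your proof is correct, and in the main case it takes a genuinely different route from the paper.

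In your case \textbf{(II)} (that is, $a\prec b$ with $\ell(b)-\ell(a)\geq 2$) the paper invokes considerably more machinery: it uses Formula~\eqref{eq2} together with \cite[Lemma~13(a)]{MM1} to deduce $b\leq_J a$, then appeals to \cite{Ge} to interpret the two-sided order as the dominance order on shapes, and finally argues with Robinson--Schensted tableau combinatorics to bound $\mathbf{m}(b)$. You bypass all of this and work entirely in one-line notation: from the two descent containments $\mathbf{rds}(b)\subset\mathbf{rds}(a)$ and $\mathbf{lds}(b)\subset\mathbf{lds}(a)$ you extract that both $b$ and $b^{-1}$ are increasing on positions $\geq\mathbf{m}(a)$, and a short pigeonhole count then gives $\mathbf{m}(b)\leq 2\mathbf{m}(a)-2$. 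This is more elementary and self-contained (no RS, no cell theory, no external citations), and even yields a slightly sharper bound. The paper's approach, on the other hand, situates the lemma inside the cell picture that is used elsewhere in the paper.

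In case \textbf{(I)} the two arguments are close in spirit: both rely on the no-intermediate-entry description of Bruhat covers. You obtain the sharp bound $\mathbf{m}(b)\leq\mathbf{m}(a)+1$ directly, whereas the paper argues by contradiction from $\mathbf{m}(b)>2\mathbf{m}(a)$. One presentational remark: your sentence ``matching this flipped step to $\{p,q\}$ locates the swap'' is doing real work---it is what guarantees $p\leq\mathbf{m}(a)$, which you then use implicitly when asserting that position $\mathbf{m}(a)+1$ lies strictly between $p$ and $q$ and that $a(p)\leq\mathbf{m}(a)$. It would read more cleanly if you stated that conclusion explicitly. Similarly, in the sub-case $b\prec a$ the phrase ``force $p>q$'' is a compressed way of saying $p=a(p)>q$; you could simply note that $b\prec a$ implies $\mathbf{m}(b)\leq\mathbf{m}(a)$ by the subword property of Bruhat order, which is what the paper means by ``obvious''.
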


\begin{proof}
We know that $\mu(a,b)\neq 0$ implies $a\prec b$ or $b\prec a$. If
$b\prec a$, then the claim of the lemma is obvious. Therefore it remains to 
consider the case $a\prec b$. 

Assume $\ell(a)=\ell(b)-1$ and let $t\in \mathbf{rds}(a)\setminus \mathbf{rds}(b)$.
Assume, for contradiction, that $\mathbf{m}(b)> 2\mathbf{m}(a)$
(note that $\mathbf{m}(a)\neq 0$ as we assume that
$\mathbf{rds}(a)\neq \varnothing$ and hence, in particular,  $a\neq e$). Then
$b=as_{\mathbf{m}(b)-1}$ and $s_{\mathbf{m}(b)-1}$ commutes with all simple
reflections involved in $a$, in particular, with $t$.
Consequently, $t\in \mathbf{rds}(b)$, a contradiction.

Finally, assume $\ell(a)<\ell(b)-1$. Then we must have 
both $\mathbf{rds}(b)\subset \mathbf{rds}(a)$
and $\mathbf{lds}(b)\subset \mathbf{lds}(a)$.
We assume  $\mathbf{rds}(a)\setminus \mathbf{rds}(b)\neq \varnothing$.
Let $t\in \mathbf{rds}(a)\setminus \mathbf{rds}(b)$. Then, from Formula~\eqref{eq2},
we have that $\hat{\underline{H}}^{(k)}_b$ appears with a non-zero coefficient in
the decomposition of $\hat{\underline{H}}^{(k)}_a{\underline{H}}^{(k)}_t$ 
with respect to the dual KL basis. In particular, $b\leq_J a$.

Let $\lambda$ be the shape of $P_a$ and $\mu$ be the shape of $P_b$.
Then we have $\lambda=(\lambda_1,\lambda_2,\dots)$ and
$\mu=(\mu_1,\mu_2,\dots)$. Since $b\leq_J a$, the partition $\mu$ dominates
$\lambda$, in particular, $n-\mu_1\leq n-\lambda_1$. 

In order to prove that $\mathbf{m}(b)<2\mathbf{m}(a)$, we need to show that,
for any $i> 2\mathbf{m}(a)$, the box $i$ appears in the first row of
both $P_b$ and $Q_b$. We do it for $P_b$ and for $Q_b$ the argument is similar.
Assume that $i$ does not appear in the first row of $P_b$. Since
$\mathbf{lds}(b)\subset \mathbf{lds}(a)$ and $s_{i-1}\not\in \mathbf{lds}(a)$,
we have $s_{i-1}\not\in \mathbf{lds}(b)$. Hence $i-1$ must appear in $P_b$ 
in the same row as $i$ or in a row with higher number. Similarly for
$i-2$ and so on all the way down to $\mathbf{m}(a)$. However,
$\lambda$ has at most $\mathbf{m}(a)-1$ boxes combined in rows $2$, $3$ and so on.
Since $\mu$ dominates $\lambda$, we have that $\mu$ has 
at most $\mathbf{m}(a)-1$ boxes combined in rows $2$, $3$ and so on as well.
This means that we will eventually find an element in $\mathbf{lds}(b)$
which is not in $\mathbf{lds}(a)$, a contradiction. The claim of the lemma follows.
\end{proof}

\subsection{Completing the proof of Theorem~\ref{mainthm}\eqref{mainthm.2}}\label{s3.5}

Now we can prove Claim~\eqref{mainthm.2} of  Theorem~\ref{mainthm} under 
the assumption that $y=s$ is a simple reflection.
If $x=e$, then $\hat{\underline{H}}^{(k)}_e{\underline{H}}^{(k)}_s=0$, 
so it remains to consider the case $x\neq e$.
If we look at Formula~\eqref{eq2} and take  Lemma~\ref{lem-2} into account, we see that,
if we fix $x$, and hence also $\mathbf{m}(x)$, and let $n$ increase, then all 
non-trivial summands in Formula~\eqref{eq2} will correspond to $u$ such that
$\mathbf{m}(u)\leq 2\mathbf{m}(x)$. In other words, all these summands will be the same,
for $n$ big enough. This completes the proof.

\section{Application: category $\mathcal{O}$ for $\mathfrak{sl}_\infty$}\label{s4}

\subsection{Classical category $\mathcal{O}$ for $\mathfrak{sl}_n$}\label{s4.1}

For a Lie algebra $\mathfrak{g}$, we denote by $U(\mathfrak{g})$
its universal enveloping algebra.

For $n\geq 2$, consider the Lie algebra $\mathfrak{sl}_n$ over the complex numbers
and consider the standard triangular decomposition
\begin{displaymath}
\mathfrak{sl}_n=\mathfrak{n}^{(n)}_-\oplus \mathfrak{h}^{(n)}
\oplus \mathfrak{n}^{(n)}_+ 
\end{displaymath}
of this Lie algebra (here $\mathfrak{n}^{(n)}_-$ is the subalgebra of all strictly
lower triangular matrices, $\mathfrak{n}^{(n)}_+$ is the subalgebra of all strictly
upper triangular matrices, and $\mathfrak{h}^{(n)}$ is the Cartan subalgebra of
all traceless diagonal matrices). Associated to this triangular decomposition, we have the
BGG category $\mathcal{O}$, see \cite{BGG,Hu}, defined as the full 
subcategory of the category of all finitely generated 
$\mathfrak{sl}_n$-modules consisting of all 
$\mathfrak{h}$-semi-simple and $U(\mathfrak{n}_+)$-locally finite modules.

Consider the {\em principal block $\mathcal{O}_0$} of $\mathcal{O}$, defined
as the direct summand containing the trivial $\mathfrak{sl}_n$-module.
To emphasize dependence on $n$ (which we will need later), we will 
write $\mathcal{O}_0^{(n)}$. Simple objects in $\mathcal{O}_0^{(n)}$ are 
indexed naturally by the elements of the symmetric group $S_n$, the latter being
the Weyl group of $(\mathfrak{sl}_n,\mathfrak{h}^{(n)})$. We denote by 
$L_w^{(n)}$ the simple object corresponding to $w\in S_n$. As
an $\mathfrak{sl}_n$-module, $L_w^{(n)}$ is the simple highest weight
module with highest weight $w\cdot 0$, where $\cdot$ is the dot-action of
$S_n$ on $(\mathfrak{h}^{(n)})^*$.

The category $\mathcal{O}_0^{(n)}$ carries the natural action of the monoidal
category $\mathscr{P}^{(n)}$ of {\em projective endofunctors},
see \cite{BG}. A projective functor is defined as a summand of the functor
of tensoring with a finite dimensional $\mathfrak{sl}_n$-module.
Indecomposable projective functors are in bijection with the elements of
$S_n$, we write $w\mapsto \theta_w^{(n)}$, where $\theta_w^{(n)}$ is uniquely 
determined by the property that it maps the indecomposable projective
cover of $L_e^{(n)}$ to the indecomposable projective cover of $L_w^{(n)}$.

The split Grothendieck ring $[\mathscr{P}^{(n)}]_{\oplus}$ acts on the 
Grothendieck group of $\mathcal{O}_0^{(n)}$ and this gives rise to a
right regular representation of the integral group algebra $\mathbb{Z}[S_n]$.

The category $\mathcal{O}_0^{(n)}$ is equivalent to the module category of
a finite dimensional associative algebra $A^{(n)}$ which is known to be Koszul,
see \cite{So0}. With respect to the corresponding $\mathbb{Z}$-grading,
both $\mathcal{O}_0^{(n)}$ and $\mathscr{P}^{(n)}$ admit (unique
up to equivalence) graded lifts,
${}^{\mathbb{Z}}\mathcal{O}_0^{(n)}$ and ${}^{\mathbb{Z}}\mathscr{P}^{(n)}$, 
respectively. Interpreting the corresponding Grothendieck groups as
$\mathbb{Z}[v,v^{-1}]$-modules, where $v$ acts as the shift of grading,
the decategorification of the action of ${}^{\mathbb{Z}}\mathscr{P}^{(n)}$
on ${}^{\mathbb{Z}}\mathcal{O}_0^{(n)}$ gives the right regular
$\mathbf{H}$-module, see \cite{St0}.

Under these identifications, $\theta_w^{(n)}$ corresponds to
$\underline{H}_w^{(n)}$ and $L_w^{(n)}$ to $\hat{\underline{H}}_w^{(n)}$,
so that we can directly see the relevance of the formula in Theorem~\ref{mainthm}
in this context: this formula combinatorially describes the outcome of the
action of  $\theta_y^{(k)}$ on $L_x^{(k)}$.

\subsection{Projective limit}\label{s4.2}

Fix now $n\in\mathbb{Z}_{>0}$ such that $n>2$ and consider the 
standard embedding $\mathfrak{sl}_{n-1}\subset \mathfrak{sl}_n$
with respect to the left upper corner of a matrix. Note that we have
$\mathfrak{sl}_{n-1}\cap \mathfrak{n}^{(n)}_-=
\mathfrak{n}^{(n-1)}_-$, $\mathfrak{sl}_{n-1}\cap \mathfrak{n}^{(n)}_+=
\mathfrak{n}^{(n-1)}_+$ and $\mathfrak{sl}_{n-1}\cap \mathfrak{h}^{(n)}=
\mathfrak{h}^{(n-1)}$.

Let $\mathbf{R}^{(n)}\subset (\mathfrak{h}^{(n)})^*$ be the root
system of $(\mathfrak{sl}_n,\mathfrak{h}^{(n)})$. Let
$\mathbb{Z}\mathbf{R}^{(n)}\subset (\mathfrak{h}^{(n)})^*$
be the additive subgroup generated by $\mathbf{R}^{(n)}$.
Then, for each $M\in \mathcal{O}_0^{(n)}$, we have
\begin{displaymath}
M\cong\bigoplus_{\lambda\in \mathbb{Z}\mathbf{R}^{(n)}} M_\lambda,
\quad\text{ where }\quad
M_\lambda=\{v\in M\,: hv=\lambda(h)v\text{ for all }h\in\mathfrak{h}^{(n)}\}.
\end{displaymath}
Let  $\mathbf{Q}^{(n-1)}\subset \mathbf{R}^{(n)}$ be the subset of all
roots corresponding to the elements of $\mathfrak{sl}_{n-1}$
and $\mathbb{Z}\mathbf{Q}^{(n-1)}\subset (\mathfrak{h}^{(n)})^*$
be the corresponding additive subgroup. Then, for any 
$M\in \mathcal{O}_0^{(n)}$, the space
\begin{displaymath}
\mathbf{F}^n_{n-1}M:=
\bigoplus_{\lambda\in \mathbb{Z}\mathbf{Q}^{(n-1)}} M_\lambda
\end{displaymath}
is invariant under the $\mathfrak{sl}_{n-1}$-action and is, in fact,
an object of $\mathcal{O}_0^{(n-1)}$.
By restriction, this defines a functor
\begin{displaymath}
\mathbf{F}^n_{n-1}:\mathcal{O}_0^{(n)}\to\mathcal{O}_0^{(n-1)}
\end{displaymath}
and hence gives rise to an inverse system of categories and functors
\begin{equation}\label{eqn1}
\xymatrix{
\mathcal{O}_0^{(2)}&&
\mathcal{O}_0^{(3)}\ar[ll]_{\mathbf{F}^3_2}
&&
\mathcal{O}_0^{(4)}\ar[ll]_{\mathbf{F}^4_3}&&
\dots\ar[ll]_{\mathbf{F}^5_4}
}
\end{equation}

We note that there is an alternative way to think of 
$\mathbf{F}^n_{n-1}$, see \cite{CMZ}.
Consider the usual embedding $S_{n-1}\subset S_n$ and recall that 
the simple objects of $\mathcal{O}_0^{(n)}$ are $L_w^{(n)}$, where $w\in S_n$.
Let $\mathcal{X}_n$ be the Serre subcategory of $\mathcal{O}_0^{(n)}$
generated by all simples $L_w$, where $w\in S_n\setminus S_{n-1}$.
Then $\mathbf{F}^n_{n-1}$ induces and equivalence between
$\mathcal{O}_0^{(n)}/\mathcal{X}_n$ and $\mathcal{O}_0^{(n-1)}$ and this 
equivalence sends  $L_w^{(n)}$ to $L_w^{(n-1)}$, for $w\in S_{n-1}$. 
This  alternative description,  in particular, implies that 
\eqref{eqn1} admits a graded lift
\begin{equation}\label{eqn2}
\xymatrix{
{}^{\mathbb{Z}}\mathcal{O}_0^{(2)}&&
{}^{\mathbb{Z}}\mathcal{O}_0^{(3)}\ar[ll]_{{}^{\mathbb{Z}}\mathbf{F}^3_2}
&&
{}^{\mathbb{Z}}\mathcal{O}_0^{(4)}\ar[ll]_{{}^{\mathbb{Z}}\mathbf{F}^4_3}&&
\dots\ar[ll]_{{}^{\mathbb{Z}}\mathbf{F}^5_4}
}
\end{equation}
We define the category $\mathcal{C}$ as the limit of \eqref{eqn1}
and the category ${}^{\mathbb{Z}}\mathcal{C}$ as the limit of \eqref{eqn2},
see e.g. \cite[Section~3]{EA15}.

Note that all categories $\mathcal{O}_0^{(n)}$ are length categories and
the functors $\mathbf{F}^n_{n-1}$ are exact and send simple
objects to either simple objects or zero. Therefore they are 
``shortening'' in the terminology of \cite[Section~4]{EA15},
so that we can consider the {\em restricted limits}  of the above 
inverse systems, which we denote by $\mathcal{C}^{\mathrm{res}}$
and ${}^{\mathbb{Z}}\mathcal{C}^{\mathrm{res}}$, respectively.
We will provide more details on these later, in Subsection~\ref{s4.5}.
An important property of both $\mathcal{C}^{\mathrm{res}}$ 
and ${}^{\mathbb{Z}}\mathcal{C}^{\mathrm{res}}$ is that 
they are length categories, see \cite[Lemma~4.1.3]{EA15}.

\subsection{Simple highest weight modules for $\mathfrak{sl}_\infty$}\label{s4.3}
This subsection is inspired by \cite{Na1,Na2,Na3,CP}.

Consider the directed system
$\mathfrak{sl}_2\hookrightarrow 
\mathfrak{sl}_3\hookrightarrow \dots$
given by inclusions with respect to the left upper corner 
and denote by $\mathfrak{sl}_\infty$ the Lie algebra that is the limit of this system.
Taking the component-wise limit, gives a triangular decomposition
\begin{displaymath}
\mathfrak{sl}_\infty=\mathfrak{n}^{(\infty)}_-\oplus \mathfrak{h}^{(\infty)}
\oplus \mathfrak{n}^{(\infty)}_+. 
\end{displaymath}
In this setup, one can define a few variations of the 
classical category $\mathcal{O}$, see e.g. \cite{Na1,Na2,Na3,CP}.
We will restrict our attention to the following category.

For $\lambda\in (\mathfrak{h}^{(\infty)})^*$, consider the
{\em Verma module} $M(\lambda)^{(\infty)}$ defined as the 
quotient of $U(\mathfrak{sl}_\infty)$ by the left ideal
generated by $\mathfrak{n}^{(\infty)}_+$ and all $h-\lambda(h)$,
where $h\in \mathfrak{h}^{(\infty)}$ (note that this ideal
is not finitely generated). The module $M(\lambda)^{(\infty)}$
is an $\mathfrak{h}^{(\infty)}$-weight module with finite
dimensional weight spaces. As usual, the Verma module
$M(\lambda)^{(\infty)}$ has simple top, denoted 
$L(\lambda)^{(\infty)}$. The module $L(\lambda)^{(\infty)}$ 
is the simple highest weight module with highest weight $\lambda$. 

Consider the full subcategory $\mathcal{O}^{(\infty,\mathrm{fl})}$ of the category
of all weight $\mathfrak{h}^{(\infty)}$-weight $\mathfrak{sl}_\infty$-modules
which consists of all objects that have a finite composition series
with simple highest weight subquotients. Note that the analogue
of $\mathcal{O}^{(\infty,\mathrm{fl})}$ for each
$\mathfrak{sl}_n$, with $n<\infty$, is the usual BGG category $\mathcal{O}$.
We denote by $\mathcal{O}^{(\infty,\mathrm{fl})}_0$ the direct summand of
$\mathcal{O}^{(\infty,\mathrm{fl})}$ which contains the trivial
$\mathfrak{sl}_\infty$-module.

We emphasize that, in general, the module $M(\lambda)^{(\infty)}$ has infinite 
length and hence does not belong to $\mathcal{O}^{(\infty,\mathrm{fl})}$.
For example, $M(0)^{(\infty)}$ does not belong to 
$\mathcal{O}^{(\infty,\mathrm{fl})}$.

Denote by $S_\infty$ the limit of the directed system
$S_1\hookrightarrow S_2\hookrightarrow \dots$ defined
via the obvious inclusions. The group $S_\infty$ acts 
on $(\mathfrak{h}^{(\infty)})^*$ both in the obvious way
and via the dot-action.

For $n\geq 2$, let $\mathbf{Q}^{(n)}\subset (\mathfrak{h}^{(\infty)})^*$
be the set of all roots of the $\mathfrak{sl}_n$-part of 
$\mathfrak{sl}_\infty$. Let $\mathbb{Z}\mathbf{Q}^{(n)}$
be the additive subgroup of $(\mathfrak{h}^{(\infty)})^*$ 
spanned by $\mathbf{Q}^{(n)}$. For 
$M\in \mathcal{O}^{(\infty,\mathrm{fl})}_0$
consider the $\mathfrak{sl}_n$-module
\begin{equation}\label{eq72}
\mathbf{F}^\infty_n M:=
\bigoplus_{\mu\in \mathbb{Z}\mathbf{Q}^{(n)}} M_\mu.
\end{equation}
Then $\mathbf{F}^\infty_n$ gives rise to a functor from
$\mathcal{O}^{(\infty,\mathrm{fl})}_0$ to $\mathcal{O}^{(n)}_0$
which sends $L(w\cdot 0)^{(\infty)}$ to $L_w^{(n)}$,
if $w\in S_n$, and to zero otherwise.

\begin{proposition}\label{prop71}
For $\lambda\in (\mathfrak{h}^{(\infty)})^*$, we have
$L(\lambda)^{(\infty)}\in \mathcal{O}^{(\infty,\mathrm{fl})}_0$
if and only if $\lambda\in S_\infty\cdot 0$.
\end{proposition}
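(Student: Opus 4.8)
The plan is to characterise directly the simple objects of the principal block, handling the two implications separately. Since every object of $\mathcal{O}^{(\infty,\mathrm{fl})}_0$ has finite length with simple highest weight subquotients, a module of the form $L(\lambda)^{(\infty)}$ is automatically an object of $\mathcal{O}^{(\infty,\mathrm{fl})}$; hence the only content of the statement is membership in the summand containing $L(0)^{(\infty)}$, i.e. being linked to the trivial module through a chain of finite-length indecomposables.

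For the ``only if'' direction, assume $L(\lambda)^{(\infty)}\in\mathcal{O}^{(\infty,\mathrm{fl})}_0$. First I would note that the $\mathfrak{sl}_\infty$-action preserves cosets modulo the root lattice $\mathbb{Z}\mathbf{R}^{(\infty)}=\bigcup_n\mathbb{Z}\mathbf{Q}^{(n)}$, so every indecomposable weight module, and in particular the whole block of $L(0)^{(\infty)}$, has all of its weights in a single such coset; as $0$ lies in the root lattice, linkage to $L(0)^{(\infty)}$ forces $\lambda\in\mathbb{Z}\mathbf{Q}^{(N)}$ for some $N$. Then, for every $n\geq N$, the truncation $\mathbf{F}^\infty_n L(\lambda)^{(\infty)}$ is a nonzero object of $\mathcal{O}^{(n)}_0$: the original highest weight vector survives, since its weight lies in $\mathbb{Z}\mathbf{Q}^{(n)}$, and it is still annihilated by $\mathfrak{n}^{(n)}_+$, so the simple $\mathfrak{sl}_n$-module $L^{(n)}_{\lambda|_{\mathfrak{h}^{(n)}}}$ of highest weight $\lambda|_{\mathfrak{h}^{(n)}}$ occurs as the top of the $\mathfrak{sl}_n$-submodule it generates, hence as a composition factor. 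Because $\mathbf{F}^\infty_n L(\lambda)^{(\infty)}\in\mathcal{O}^{(n)}_0$, every composition factor is of the form $L^{(n)}_{u}$ with $u\in S_n$, so $\lambda|_{\mathfrak{h}^{(n)}}=u_n\cdot 0$ for some $u_n\in S_n$. Regularity of $0$ makes $u_n$ unique, the $u_n$ are compatible under $S_n\subset S_{n+1}$, and they stabilise to a single $w\in S_\infty$ with $\lambda=w\cdot 0$.

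For the ``if'' direction, given $\lambda=w\cdot 0$ with $w\in S_n$, the module $L(w\cdot 0)^{(\infty)}$ is simple, hence of finite length and an object of $\mathcal{O}^{(\infty,\mathrm{fl})}$; the task is to show it lies in the block of $L(0)^{(\infty)}$. I would argue by induction on $\ell(w)$: writing $w=w's$ with $\ell(w')=\ell(w)-1$, it suffices to produce a finite-length indecomposable object of $\mathcal{O}^{(\infty,\mathrm{fl})}$ having both $L(w\cdot 0)^{(\infty)}$ and $L(w'\cdot 0)^{(\infty)}$ as composition factors, equivalently a nonzero $\operatorname{Ext}^1$ between these two simples. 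Such a linking module exists at finite rank inside $\mathcal{O}^{(n)}_0$, where $L^{(n)}_{w}$ and $L^{(n)}_{w'}$ are adjacent in the principal block; the point is to transport it upward along the inverse system of Subsection~\ref{s4.2}, realising a coherent family $(X_k)_{k\geq n}$ of such linking modules, compatible under the truncations $\mathbf{F}^k_{k-1}$, whose restricted limit is the desired finite-length object of $\mathcal{O}^{(\infty,\mathrm{fl})}_0$. One cannot simply invoke a Verma module here, since $M(0)^{(\infty)}$, and various $M(u\cdot 0)^{(\infty)}$, have infinite length and do not lie in $\mathcal{O}^{(\infty,\mathrm{fl})}$.

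The main obstacle is exactly this upward transport in the ``if'' direction. All of the naturally available functors $\mathbf{F}^\infty_n$ and $\mathbf{F}^k_{k-1}$ point downward, being truncations and Serre quotients, so producing a genuine non-split extension that survives inside the finite-length category requires control of the restricted inverse limit; equivalently, one must know that the relevant $\operatorname{Ext}^1$ between simples does not collapse in the limit but agrees with its stable finite-rank value. This is precisely the stability and extension-control phenomenon that the remainder of Section~\ref{s4} is designed to supply, so I expect the clean execution of the ``if'' direction to rely on the restricted-limit formalism of \cite{EA15} developed there, whereas the ``only if'' direction is essentially self-contained.
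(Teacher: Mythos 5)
There is a genuine gap, and it sits exactly where you stopped: the ``if'' direction. Your reduction agrees with the paper's (induct on the length of $w$ and show that $\mathrm{Ext}^1$ between $L(w\cdot 0)^{(\infty)}$ and the adjacent simple $L(sw\cdot 0)^{(\infty)}$ is nonzero), but you never construct the required non-split extension; you only express the hope that the ``restricted-limit formalism'' of \cite{EA15} will supply it. It cannot, for two reasons. First, the identification of $\mathcal{O}^{(\infty,\mathrm{fl})}_0$ with the restricted limit $\mathcal{C}^{\mathrm{res}}$ is Theorem~\ref{thm77}, which comes \emph{after} this proposition and whose proof (in particular, that $\Phi^{\mathrm{res}}$ lands in $\mathcal{C}^{\mathrm{res}}$, via Theorem~\ref{mainthm}) presupposes knowing which simples lie in the block; invoking it here is circular. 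Second, even granted that formalism, exactness of the downward functors $\mathbf{F}^k_{k-1}$ does not produce an $\mathfrak{sl}_\infty$-module realizing a coherent family of finite-rank extensions --- one must exhibit an actual module. The paper's construction is precisely the one you ruled out: it \emph{does} use the Verma module $M(w\cdot 0)^{(\infty)}$. Its infinite length is harmless, because one immediately passes to a finite-length quotient: taking $N$ to be the largest submodule with $N_{w\cdot 0}=N_{sw\cdot 0}=0$, the quotient $M(w\cdot 0)^{(\infty)}/N$ is indecomposable with simple top $L(w\cdot 0)^{(\infty)}$ and simple socle $L(sw\cdot 0)^{(\infty)}$; one then shows it has length exactly $2$ by assuming an extra composition factor $L(x\cdot 0)^{(\infty)}$, truncating by $\mathbf{F}^\infty_n$ for $n$ with $w,sw,x\in S_n$, and contradicting the one-dimensionality of $\mathrm{Ext}^1_{\mathcal{O}^{(n)}_0}(L_w^{(n)},L_{sw}^{(n)})$, which forces the submodule generated by any such extra factor to miss the socle. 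In other words, the extension is built directly at infinite rank and only \emph{verified} by passing down to finite rank --- the opposite of the upward transport you propose.

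Your ``only if'' direction is also not as self-contained as you claim. The coset-modulo-root-lattice argument correctly gives $\lambda\in\mathbb{Z}\mathbf{Q}^{(N)}$, i.e.\ integrality, but the decisive step ``because $\mathbf{F}^\infty_n L(\lambda)^{(\infty)}\in\mathcal{O}^{(n)}_0$, every composition factor is of the form $L^{(n)}_u$ with $u\in S_n$'' assumes that the truncation of an arbitrary simple in the block lands in the \emph{principal} block of $\mathfrak{sl}_n$. Integrality alone does not give this: a dominant integral restriction $\lambda|_{\mathfrak{h}^{(n)}}$ would place the truncation in a different block entirely, so linkage to $0$ at finite rank is exactly what must be proven, and justifying the paper's setup assertion that $\mathbf{F}^\infty_n$ maps $\mathcal{O}^{(\infty,\mathrm{fl})}_0$ to $\mathcal{O}^{(n)}_0$ for all objects is essentially equivalent to the classification of simples you are trying to establish. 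The paper sidesteps this circle by citing \cite[Theorem~2.20]{Na2} for this implication; if you want a self-contained argument, you would need to transport the linkage chain at level $\infty$ down to level $n$, which again requires controlling non-splitness under truncation rather than asserting block membership.
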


\begin{proof}
The ``only if'' part follows from \cite[Theorem~2.20]{Na2}. 

Let $w\in S_\infty$ and $s$ be a simple reflection such that $sw\succ w$. 
To prove the ``if'' part, it is enough to show that
$\mathrm{Ext}^1_{\mathcal{O}^{(\infty)}_0}(L(w\cdot 0)^{(\infty)},
L(sw\cdot 0)^{(\infty)})\neq 0$. For this, consider the Verma module
$M(w\cdot 0)^{(\infty)}$. Since $s$ is a simple reflection
and $sw\succ w$, the dimension of $M(w\cdot 0)^{(\infty)}_{sw\cdot 0}$ equals $1$.

Let $N$ be the biggest submodule of $M(w\cdot 0)^{(\infty)}$ with the
property $N_{w\cdot 0}=N_{sw\cdot 0}=0$. Then $M(w\cdot 0)^{(\infty)}/N$
is an indecomposable module with simple top $L(w\cdot 0)^{(\infty)}$
and simple socle $L(ws\cdot 0)^{(\infty)}$. If $M(w\cdot 0)^{(\infty)}/N$
has length $2$, we are done. If not, $M(w\cdot 0)^{(\infty)}/N$
has some simple subquotient of the form $L(x\cdot 0)^{(\infty)}$,
for some $x\in S_\infty$ such that $x\neq w$ and $x\neq sw$. 

Choose $n$ such that  $w,sw,x\in S_n$ and 
consider $K:=\mathbf{F}^\infty_n(M(w\cdot 0)^{(\infty)}/N)$.
This module is, by construction, a quotient of the Verma module $M(w\cdot 0)^{(n)}$.
As the space $\mathrm{Ext}^1_{\mathcal{O}^{(n)}_0}(L(w\cdot 0)^{(n)},
L(sw\cdot 0)^{(n)})$ is one-dimensional, the module
$M(w\cdot 0)^{(n)}$ has a length $2$ quotient with simple top
$L(w\cdot 0)^{(n)}$ and simple socle $L(sw\cdot 0)^{(n)}$.
Consequently, any $\mathfrak{sl}_n$-submodule of 
$K$ generated by any $L(x\cdot 0)^{(\infty)}$ as above
does not contain any subquotient of the form $L(sw\cdot 0)^{(n)}$.
Since this is true for any $x$ as above and any $n\gg 0$, we get a 
contradiction with the fact that $L(ws\cdot 0)^{(\infty)}$ is the simple 
socle of $M(w\cdot 0)^{(\infty)}/N$. Therefore our assumption that 
$M(w\cdot 0)^{(\infty)}/N$ has length different from $2$ is false. The claim of the
proposition follows.
\end{proof}

%
%
%

\subsection{Action of projective functors on restricted limits}\label{s4.5}

For each $w\in S_\infty$, we want to define an endofunctor 
$\theta_w^{(\infty)}$ of the category $\mathcal{C}$
(and then of the category $\mathcal{C}^{\mathrm{res}}$ as well). For this, recall
from \cite[Definition~3.1.1]{EA15} that objects of $\mathcal{C}$
are pairs $(\{C_i\,:\,i\geq 2\},\{\phi_i\,:\, i\geq 3\})$, where
$C_i\in\mathcal{O}_0^{(i)}$ and $\phi_i:\mathbf{F}^i_{i-1}C_i
\overset{\cong}{\longrightarrow} C_{i-1}$
is an isomorphism. A morphism $f$ from an object
$(\{C_i\,:\,i\geq 2\},\{\phi_i\,:\, i\geq 3\})$ to
$(\{C'_i\,:\,i\geq 2\},\{\phi'_i\,:\, i\geq 3\})$
consists of $f_i:C_i\to C'_i$, for all $i\geq 2$, such that 
the following diagram commutes:
\begin{displaymath}
\xymatrix{
\mathbf{F}^i_{i-1}C_i\ar[rr]^{\phi_i}\ar[d]_{\mathbf{F}^i_{i-1}f_i}
&&C_{i-1}\ar[d]^{f_{i-1}}\\
\mathbf{F}^i_{i-1}C'_i\ar[rr]^{\phi'_i}&&C'_{i-1}
}
\end{displaymath}
Composition of morphisms is component-wise and the identity morphisms are 
given by the identities.

If $w$ is the identity, we just define $\theta_e^{\infty}$
as the identity endofunctor of $\mathcal{C}$. If $w\neq e$,
let $k=\mathbf{m}(w)$. By \cite[Theorem~37]{CMZ}, for all
$i>k$, we can fix isomorphisms 
$\alpha_i:\mathbf{F}^i_{i-1}\circ\theta_w^{(i)}\cong 
\theta_w^{(i-1)}\circ\mathbf{F}^i_{i-1}$.

We define the action of $\theta_w^{(\infty)}$
on the object $(\{C_i\,:\,i\geq 2\},\{\phi_i\,:\, i\geq 3\})$
as the object $(\{D_i\,:\,i\geq 2\},\{\psi_i\,:\, i\geq 3\})$, where
\begin{displaymath}
D_i=
\begin{cases}
\theta_w^{(i)}C_i,& i\geq k;\\
\mathbf{F}^{i+1}_{i}\circ\dots\circ
\mathbf{F}^{k-1}_{k-2}\circ\mathbf{F}^{k}_{k-1}\theta_w^{(k)}C_{k},& i<k;
\end{cases}
\end{displaymath}
and
\begin{displaymath}
\psi_i=
\begin{cases}
\theta_w^{(i-1)}(\psi_{i-1})\circ (\alpha_i)_{C_i},& i> k;\\
\mathrm{id},& i\leq k.
\end{cases}
\end{displaymath}
The action of $\theta_w^{(\infty)}$ on a morphism $f=\{f_i\}$ is defined
as $g=\{g_i\}$, where
\begin{displaymath}
g_i=
\begin{cases}
\theta_w^{(i-1)}(f_i)& i>k;\\
\mathbf{F}^{i+1}_{i}\circ\dots\circ
\mathbf{F}^{k-1}_{k-2}\circ\mathbf{F}^{k}_{k-1}\theta_w^{(k)}(f_{k+1}),& i\leq k.
\end{cases}
\end{displaymath}

\begin{lemma}\label{lem74}
The above gives  a well-defined endofunctor
of $\mathcal{C}$. The functor $\theta_w^{(\infty)}$ is exact and
biadjoint to $\theta_{w^{-1}}^{(\infty)}$.
\end{lemma}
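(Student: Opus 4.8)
The plan is to verify the three claims of Lemma~\ref{lem74} in turn: well-definedness as a functor, exactness, and biadjointness. For well-definedness, I would first check that each pair $(\{D_i\},\{\psi_i\})$ really is an object of $\mathcal{C}$, i.e. that each $\psi_i$ is an isomorphism $\mathbf{F}^i_{i-1}D_i\xrightarrow{\cong}D_{i-1}$. For $i>k$ this follows because $(\alpha_i)_{C_i}$ is an isomorphism (the $\alpha_i$ are fixed isomorphisms of functors from \cite[Theorem~37]{CMZ}) and $\phi_{i-1}$ is an isomorphism, so their composite is too; for $i\le k$ the maps are identities by construction, and the definition of $D_i$ for $i<k$ is arranged precisely so that $\mathbf{F}^i_{i-1}D_i=D_{i-1}$ on the nose. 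Then I would check functoriality: that $g=\{g_i\}$ is a morphism in $\mathcal{C}$ (the requisite squares commute, which again reduces to naturality of the $\alpha_i$ and the fact that $f$ was already a morphism), and that $\theta_w^{(\infty)}$ respects identities and composition, which is immediate from the component-wise definitions since each $\theta_w^{(i)}$ is a functor.

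For exactness, the key point is that the limit category $\mathcal{C}$ has a component-wise abelian/exact structure: a sequence in $\mathcal{C}$ is exact if and only if each of its components is exact in $\mathcal{O}_0^{(i)}$. Since every $\theta_w^{(i)}$ is exact (projective functors are exact, being summands of tensoring with a finite-dimensional module) and every $\mathbf{F}^i_{i-1}$ is exact (stated explicitly in Subsection~\ref{s4.2}), the components $D_i$ of $\theta_w^{(\infty)}$ are obtained from the $C_i$ by applying compositions of exact functors. Hence $\theta_w^{(\infty)}$ preserves short exact sequences component-wise and is therefore exact.

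For biadjointness, the strategy is to descend the known biadjunction at finite level to the limit. Each $\theta_w^{(i)}$ is biadjoint to $\theta_{w^{-1}}^{(i)}$ in $\mathcal{O}_0^{(i)}$ (a standard fact about projective functors: $\theta_w$ and $\theta_{w^{-1}}$ form a biadjoint pair). I would assemble the finite-level adjunction units and counits into morphisms in $\mathcal{C}$, checking that they are compatible with the transition data $\{\phi_i\}$ via the isomorphisms $\alpha_i$, and then verify the triangle (zigzag) identities component-wise, where they hold because they hold at each finite level. The naturality of the $\alpha_i$ is what guarantees that the finite-level (co)units glue into genuine morphisms of the inverse system.

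The main obstacle I expect is the bookkeeping in the well-definedness step, specifically the interaction between the two regimes $i>k$ and $i\le k$ and the compatibility of the chosen isomorphisms $\alpha_i$. One must confirm that the $\psi_i$ defined via $\theta_w^{(i-1)}(\psi_{i-1})\circ(\alpha_i)_{C_i}$ are coherent across all $i$, and that the ad hoc definition of $D_i$ for $i<k$ (applying a chain of restriction functors $\mathbf{F}$ to $\theta_w^{(k)}C_k$) is forced by, and consistent with, the requirement that objects below $k$ be recovered from level $k$. Since $\mathbf{m}(w)=k$ means $\theta_w^{(i)}$ is only defined and compatible with the $\alpha_i$ for $i>k$, the careful treatment of the boundary index $i=k$ is where the verification is most delicate; the rest of the argument is routine application of naturality and the component-wise exact structure of $\mathcal{C}$.
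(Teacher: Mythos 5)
Your proposal is correct and follows essentially the same route as the paper: the paper's proof is a one-line observation that everything follows component-wise from the construction, since each $\theta_w^{(i)}$ is exact and biadjoint to $\theta_{w^{-1}}^{(i)}$ (citing \cite[Proposition~3.2.4]{EA15} for the limit formalism), which is exactly the verification you spell out in detail. Your expanded treatment of the boundary index $i=k$ and of gluing the finite-level (co)units via the $\alpha_i$ is a faithful elaboration of what the paper leaves implicit, not a different argument.
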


\begin{proof}
This follows directly from the construction and definitions 
since each $\theta_w^{(i)}$ is exact and
biadjoint to $\theta_{w^{-1}}^{(i)}$,
see also \cite[Proposition~3.2.4]{EA15}.
\end{proof}

\begin{proposition}\label{prop75}
For any $w\in S_\infty$, the functor $\theta_w^{(\infty)}$ 
restricts to an endofunctor of the category $\mathcal{C}^{\mathrm{res}}$.
\end{proposition}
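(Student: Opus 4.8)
The plan is to exploit the length-boundedness description of the restricted limit: $\mathcal{C}^{\mathrm{res}}$ is the full subcategory of $\mathcal{C}$ consisting of those $X=(\{C_i\},\{\phi_i\})$ for which the lengths $\ell(C_i)$ stay bounded as $i\to\infty$. Because each $\mathbf{F}^i_{i-1}$ is shortening, the sequence $\ell(C_i)$ is non-decreasing in $i$, so boundedness is equivalent to stabilization: there is $i_0$ with $\ell(C_i)=\ell(C_{i_0})$ for all $i\ge i_0$. Since $\mathbf{F}^i_{i-1}$ annihilates exactly the simples $L_u^{(i)}$ with $u\in S_i\setminus S_{i-1}$ and sends all others to simples, the equality $\ell(C_i)=\ell(C_{i-1})$ for $i>i_0$ forces every composition factor of $C_i$ to be labelled by an element of $S_{i_0}$. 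Thus the first step, for a given $X\in\mathcal{C}^{\mathrm{res}}$, is to fix such an $i_0$, after which the set of labels of composition factors of $C_i$ is contained in the fixed finite group $S_{i_0}$, uniformly in $i$.

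Next I would analyse $D_i=\theta_w^{(i)}C_i$ for $i\ge k=\mathbf{m}(w)$ at the level of Grothendieck groups. By Lemma~\ref{lem74} the functor $\theta_w^{(i)}$ is exact, so $[D_i]=\sum_x m_x\,[\theta_w^{(i)}L_x^{(i)}]$, where $m_x$ is the multiplicity of $L_x^{(i)}$ in $C_i$ and $x$ ranges over $S_{i_0}$. Under the identification of $L_x^{(i)}$ with $\hat{\underline{H}}^{(i)}_x$ and of $\theta_w^{(i)}$ with right multiplication by $\underline{H}^{(i)}_w$, the multiplicity of $L_u^{(i)}$ in $\theta_w^{(i)}L_x^{(i)}$ equals the value at $v=1$ of the coefficient $[\hat{\underline{H}}^{(i)}_x\underline{H}^{(i)}_w:\hat{\underline{H}}^{(i)}_u]$. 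This is precisely the quantity governed by Theorem~\ref{mainthm}.

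Now Theorem~\ref{mainthm} supplies both the support bound and the stability I need. By part~\eqref{mainthm.2}, as soon as $i>2^{\ell(w)}i_0$, every nonzero coefficient $[\hat{\underline{H}}^{(i)}_x\underline{H}^{(i)}_w:\hat{\underline{H}}^{(i)}_u]$ with $x\in S_{i_0}$ forces $u\in S_{2^{\ell(w)}i_0}$; hence the composition factors of $D_i$ are labelled by the fixed finite set $S_{2^{\ell(w)}i_0}$, independently of $i$. By part~\eqref{mainthm.1} the coefficients themselves do not depend on $i$ once $i$ is large. Consequently $\ell(D_i)$ is eventually constant, and in particular bounded. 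The remaining components $D_i$ for $i<k$ are obtained from $D_k$ by applying the shortening functors $\mathbf{F}^{i+1}_{i}\circ\dots\circ\mathbf{F}^{k}_{k-1}$, so $\ell(D_i)\le\ell(D_k)$ and no new unboundedness arises. Therefore $\theta_w^{(\infty)}X=(\{D_i\},\{\psi_i\})$ has bounded lengths and lies in $\mathcal{C}^{\mathrm{res}}$.

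I expect the main obstacle to be entirely encapsulated in Theorem~\ref{mainthm}: without the support bound of part~\eqref{mainthm.2}, the labels $u$ could a priori escape into $S_i\setminus S_{i_0'}$ for every fixed $i_0'$ as $i$ grows, which would make $\ell(D_i)$ unbounded and destroy membership in $\mathcal{C}^{\mathrm{res}}$. The rest is bookkeeping with the shortening property and the length description of the restricted limit; the one point genuinely requiring care is the clean passage, in the first step, from boundedness of $\ell(C_i)$ to the uniform confinement of the composition-factor labels of $C_i$ to a single $S_{i_0}$, since this is exactly what allows Theorem~\ref{mainthm} to be applied with a rank $i_0$ that does not drift with $i$.
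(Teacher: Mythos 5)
Your proof is correct and follows essentially the same route as the paper: exactness of $\theta_w^{(\infty)}$ reduces the question to stabilization of the lengths $\ell(\theta_w^{(i)}L_x^{(i)})$ for the finitely many labels $x$ occurring in the object, which is exactly what Theorem~\ref{mainthm} plus the categorification dictionary of Subsection~\ref{s4.1} provides; your argument simply fills in the bookkeeping that the paper's one-line proof leaves implicit. One cosmetic point: when invoking part~\eqref{mainthm.2} you should apply it with $n=\max(i_0,\mathbf{m}(w))$ (or enlarge $i_0$ so that $w\in S_{i_0}$), since the theorem requires both $x$ and $w$ to lie in $S_n$.
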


\begin{proof}
Since the functor $\theta_w^{(\infty)}$ is exact, we just need 
to prove that, for a fixed $u\in S_\infty$, the length of the module 
$\theta_w^{(i)}L_u^{(i)}$ stabilizes, for all $i\gg 0$. This
follows directly from Theorem~\ref{mainthm} and the 
categorification remarks in Subsection~\ref{s4.1}.
\end{proof}

\subsection{The Hecke algebra for $S_\infty$}\label{s4.6}

Consider the {\em Hecke algebra} $\mathbf{H}_\infty$,
defined as the $\mathbb{Z}[v,v^{-1}]$-algebra with {\em standard basis}
$\{H^{(\infty)}_w\,:\,w\in S_\infty\}$ and multiplication uniquely defined
by \eqref{eq0000} (with the superscript $(n)$ replaced by $(\infty)$). We have the
{\em Kazhdan-Lusztig basis} $\{\underline{H}^{(\infty)}_w\,:\,w\in S_\infty\}$
of $\mathbf{H}_\infty$, defined by \eqref{eq0001} 
(with the superscript $(n)$ replaced by $(\infty)$, here it is important that
the KL polynomials do not depend on $n$ at all).

Consider the free $\mathbb{Z}[v,v^{-1}]$-module $\hat{\mathbf{H}}_\infty$
with the basis $\{\hat{\underline{H}}^{(\infty)}_w\,:\,w\in S_\infty\}$.
For $x,y,w\in S_\infty$, define 
$[\hat{\underline{H}}^{(\infty)}_x{\underline{H}}^{(\infty)}_y:\hat{\underline{H}}^{(\infty)}_w]$
as the stabilized value of the expression 
$[\hat{\underline{H}}^{(k)}_x{\underline{H}}^{(k)}_y:\hat{\underline{H}}^{(k)}_w]$
in Theorem~\ref{mainthm}, where $k\gg 0$.
From Theorem~\ref{mainthm}, it follows that, setting, for 
arbitrary $x,y\in S_\infty$, 
\begin{displaymath}
\hat{\underline{H}}^{(\infty)}_x
\underline{H}^{(\infty)}_y:=
\sum_{w\in S_\infty}
[\hat{\underline{H}}^{(\infty)}_x{\underline{H}}^{(\infty)}_y:\hat{\underline{H}}^{(\infty)}_w]
\hat{\underline{H}}^{(\infty)}_w,
\end{displaymath}
turns $\hat{\mathbf{H}}_\infty$ into a (right)
$\mathbf{H}_\infty$-module.

\begin{remark}\label{rem097}
{\em
Unlike the finite case, $\hat{\mathbf{H}}_\infty$ does not coincide
with the right regular $\mathbf{H}_\infty$-mo\-du\-le.
However, one can identify $\hat{\mathbf{H}}_\infty$ with a submodule 
of a certain enlargement of the right regular $\mathbf{H}_\infty$-module.
For this, we need to allow formal Laurent series as coefficients in front
of the elements in the standard  (or Kazhdan-Lusztig) basis. Then 
one can take the limit, where $n$ goes to infinity, of the expressions
of the elements of the dual Kazhdan Lusztig basis in terms of 
the standard  (or Kazhdan-Lusztig) basis. This will output an
infinite sum with formal Laurent series as coefficients.
}
\end{remark}

\subsection{Projective functors on 
$\mathcal{O}^{(\infty,\mathrm{fl})}_0$}\label{s4.7}

Our next observation is the following.

\begin{theorem}\label{thm77}
The categories $\mathcal{C}^{\mathrm{res}}$
and $\mathcal{O}^{(\infty,\mathrm{fl})}_0$ are equivalent.
\end{theorem}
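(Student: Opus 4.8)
The plan is to construct an explicit equivalence by matching the objects of $\mathcal{C}^{\mathrm{res}}$ with the objects of $\mathcal{O}^{(\infty,\mathrm{fl})}_0$ through the truncation functors $\mathbf{F}^\infty_n$ already introduced in Subsection~\ref{s4.3}. First I would assemble the functors $\mathbf{F}^\infty_n:\mathcal{O}^{(\infty,\mathrm{fl})}_0\to\mathcal{O}^{(n)}_0$ into a single functor $\Phi:\mathcal{O}^{(\infty,\mathrm{fl})}_0\to\mathcal{C}$ landing in the inverse limit. To do this I must supply, for each object $M$, the coherence isomorphisms $\mathbf{F}^n_{n-1}\circ\mathbf{F}^\infty_n M\cong \mathbf{F}^\infty_{n-1}M$; but this is essentially automatic since both sides are the weight-space truncation $\bigoplus_{\mu\in\mathbb{Z}\mathbf{Q}^{(n-1)}}M_\mu$, so the two truncations are compatible and $\mathbf{F}^\infty_{n-1}=\mathbf{F}^n_{n-1}\circ\mathbf{F}^\infty_n$ on the nose. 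Thus $\Phi(M)=(\{\mathbf{F}^\infty_n M\},\{\mathrm{id}\})$ is a well-defined object of $\mathcal{C}$, and $\Phi$ is a functor. Each $\mathbf{F}^\infty_n$ is exact and sends $L(w\cdot 0)^{(\infty)}$ to $L_w^{(n)}$ when $w\in S_n$ and to zero otherwise.

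The second step is to check that $\Phi$ lands in the restricted limit $\mathcal{C}^{\mathrm{res}}$, rather than merely in $\mathcal{C}$. By Proposition~\ref{prop71}, the simple objects of $\mathcal{O}^{(\infty,\mathrm{fl})}_0$ are exactly the $L(w\cdot 0)^{(\infty)}$ for $w\in S_\infty$, and since every object has finite length by definition of $\mathcal{O}^{(\infty,\mathrm{fl})}_0$, a composition series of $M$ is carried by $\mathbf{F}^\infty_n$ to a filtration of $\mathbf{F}^\infty_n M$ whose nonzero subquotients are exactly the $L_w^{(n)}$ with $w$ occurring in $M$ and $w\in S_n$. Hence the lengths $\ell(\mathbf{F}^\infty_n M)$ are bounded by $\ell(M)$ and stabilize for $n\gg 0$; this is precisely the ``shortening/bounded length'' condition that characterizes objects of the restricted limit in the sense of \cite[Section~4]{EA15}. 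So $\Phi$ factors through $\mathcal{C}^{\mathrm{res}}$.

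The third step is to prove $\Phi$ is an equivalence, for which I would verify fully faithful and essentially surjective separately. For essential surjectivity, I would use that $\mathcal{C}^{\mathrm{res}}$ is a length category \cite[Lemma~4.1.3]{EA15} and show its simple objects are exactly $\Phi(L(w\cdot 0)^{(\infty)})$; the simple objects of the restricted limit correspond to compatible systems of simples in the $\mathcal{O}^{(n)}_0$, which by the recording-tableau/first-row stabilization of Subsection~\ref{s4.2} are indexed by $w\in S_\infty$, matching Proposition~\ref{prop71}. One then builds an arbitrary finite-length object of $\mathcal{C}^{\mathrm{res}}$ from these simples by lifting extensions. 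For full faithfulness, I would compare morphism spaces and extension groups: both categories are determined on $\mathrm{Hom}$ and $\mathrm{Ext}^1$ between simples, and the key input is that $\mathbf{F}^\infty_n$ and $\mathbf{F}^n_{n-1}$ induce isomorphisms on the relevant $\mathrm{Hom}$ and $\mathrm{Ext}^1$ spaces for $n\gg 0$ (the stabilization of $\mathrm{Ext}^1(L(w\cdot0),L(sw\cdot0))$ being one-dimensional was already used in the proof of Proposition~\ref{prop71}).

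The main obstacle I anticipate is the full-faithfulness step, specifically controlling higher extension data well enough to reconstruct all of $\mathcal{O}^{(\infty,\mathrm{fl})}_0$ and not merely its simples: one must argue that a morphism in $\mathcal{C}^{\mathrm{res}}$, given as a compatible system $\{f_n\}$ with $f_n:\mathbf{F}^\infty_n M\to \mathbf{F}^\infty_n M'$, lifts uniquely to a genuine $\mathfrak{sl}_\infty$-module homomorphism $M\to M'$. Here I would exploit that both $M$ and $M'$ are weight modules with finite-dimensional weight spaces, so a morphism is the same as a compatible family of linear maps on weight spaces; the compatibility of the $f_n$ with the truncation maps $\mathbf{F}^n_{n-1}$ guarantees these assemble into a single map commuting with the full $\mathfrak{sl}_\infty$-action, since every root vector lands in some $\mathfrak{sl}_n$. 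The uniqueness and surjectivity of this lift, together with the length bound ensuring no ``escape to infinity,'' is what makes $\Phi$ an equivalence; I expect this gluing argument to be the technical heart, while exactness and the object-level matching are routine consequences of the constructions already in place.
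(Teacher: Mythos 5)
Your construction of $\Phi$ is exactly the paper's: the truncations $\mathbf{F}^\infty_n$ satisfy $\mathbf{F}^n_m\circ\mathbf{F}^\infty_n=\mathbf{F}^\infty_m$ on the nose, giving a functor $M\mapsto(\{\mathbf{F}^\infty_n M\},\{\mathrm{id}\})$ that lands in $\mathcal{C}^{\mathrm{res}}$ because lengths are bounded by $\ell(M)$. Your fullness argument (a compatible system $\{f_n\}$ glues to an $\mathfrak{sl}_\infty$-map because $M=\bigcup_n\mathbf{F}^\infty_n M$ and every root vector lies in some $\mathfrak{sl}_n$) is also sound. The genuine gap is in essential surjectivity. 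You propose to identify the simple objects of $\mathcal{C}^{\mathrm{res}}$ and then ``build an arbitrary finite-length object of $\mathcal{C}^{\mathrm{res}}$ from these simples by lifting extensions,'' justified by the claim that ``both categories are determined on $\mathrm{Hom}$ and $\mathrm{Ext}^1$ between simples.'' That claim is false as a general principle: a length category is not determined by its Ext-quiver, and, more to the point, a fully faithful exact functor need not induce a \emph{surjection} on $\mathrm{Ext}^1$ — an extension $0\to\Phi(A)\to X\to\Phi(S)\to 0$ in $\mathcal{C}^{\mathrm{res}}$ has no a priori reason to have $X$ in the essential image of $\Phi$. Nothing in your proposal supplies a mechanism for realizing such an $X$ (or an arbitrary object of $\mathcal{C}^{\mathrm{res}}$) as an actual $\mathfrak{sl}_\infty$-module, and no comparison of $\mathrm{Ext}^1$ groups between the two categories is available at this point in the paper.

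The paper closes exactly this hole by constructing an explicit functor $\Psi:\mathcal{C}^{\mathrm{res}}\to\mathcal{O}^{(\infty,\mathrm{fl})}_0$: given $(\{C_i\},\{\phi_i\})$, one takes equivalence classes of compatible sequences $(v_k,v_{k+1},\dots)$ with $v_{i-1}=\phi_i(v_i)$ — in effect the union (direct limit) of the chain $C_2\hookrightarrow C_3\hookrightarrow\cdots$ along the $\phi_i^{-1}$ — with the $\mathfrak{sl}_\infty$-action defined componentwise wherever it makes sense; the uniform length bound built into the restricted limit guarantees the resulting module has finite length. One then checks $\Phi^{\mathrm{res}}\circ\Psi\cong\mathrm{id}$, which yields density and fullness simultaneously. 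Note that this is precisely your own weight-space gluing idea, applied to \emph{objects} rather than only to morphisms between objects already known to lie in the image; so the missing step is repairable with tools you already have, but as written your essential surjectivity argument is an assertion, not a proof.
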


\begin{proof}
Directly from the construction, for every $2\leq m< n$, we have 
$\mathbf{F}^n_m\circ\mathbf{F}^\infty_n=\mathbf{F}^\infty_m$.
Therefore, by the universal properties of limits, we have 
a faithful functor $\Phi$ from $\mathcal{O}^{(\infty)}_0$ to
$\mathcal{C}$ sending $M$ to 
$(\{\mathbf{F}^\infty_i M\},\{\mathrm{id}_{\mathbf{F}^\infty_i M}\})$ and with 
the obvious restriction action on morphisms.
This restricts to a faithful functor $\Phi^{\mathrm{res}}$ 
from $\mathcal{O}^{(\infty,\mathrm{fl})}_0$ 
to $\mathcal{C}^{\mathrm{res}}$ and we claim that this
latter functor is, in fact, an equivalence.

Given $(\{C_i\,:\,i\geq 2\},\{\phi_i\,:\, i\geq 3\})$ in
$\mathcal{C}^{\mathrm{res}}$, let $M$ be the vector space consisting
of the equivalence classes of all sequences $(v_k,v_{k+1},\dots)$,
where $k\geq 2$ and $v_{i-1}=\phi_i(v_{i})$, for all $i>k$. 
Two such sequences $(v_k,v_{k+1},\dots)$
and $(w_m,w_{m+1},\dots)$ are equivalent if there is $r>\max(k,m)$
such that $v_i=w_i$, for all $i>r$. The vector space structure
is defined component-wise on those components for which 
it makes sense (this means that, when adding $(v_k,v_{k+1},\dots)$
and $(w_m,w_{m+1},\dots)$, the indexing of the outcome will start
from $\max(k,m)$). Furthermore, the vector space $M$ carries the
natural structure of an $\mathfrak{sl}_\infty$-module defined again
by acting on those components for which the action makes sense
(that is, $u\in U(\mathfrak{sl}_m)$ acts on $(v_k,v_{k+1},\dots)$
outputting a vector whose indexing starts with $\max(k,m)$).
In fact, with the obvious action on morphisms, this defines a 
functor $\Psi$ from  $\mathcal{C}^{\mathrm{res}}$ to
$\mathcal{O}^{(\infty,\mathrm{fl})}_0$.

It follows directly from the definitions that 
$\Phi^{\mathrm{res}}\circ\Psi$ is isomorphic to the identity
endofunctor of $\mathcal{C}^{\mathrm{res}}$,
implying that $\Phi^{\mathrm{res}}$ is both dense and full.
The claim of the theorem follows.
\end{proof}

Combining Proposition~\ref{prop75} with Theorem~\ref{thm77},
for every $w\in S_\infty$, we have an 
endofunctor $\theta_w^{(\infty)}$ of 
$\mathcal{O}^{(\infty,\mathrm{fl})}_0$.
Putting all $\theta_w^{(\infty)}$ together, we have a 
monoidal category $\mathscr{P}^{(\infty)}$ of 
projective endofunctors of $\mathcal{O}^{(\infty,\mathrm{fl})}_0$.
The objects are the functors from the additive closure of all
$\theta_w^{(\infty)}$ and the morphisms are natural transformations
of functors. The monoidal product is composition of functors.
The split Grothendieck ring of $\mathscr{P}^{(\infty)}$ is 
isomorphic to $\mathbb{Z}[S_\infty]$ (resp. to the Hecke algebra
of $S_\infty$ in the graded version), where $\theta_w^{(\infty)}$
corresponds to $\underline{H}_w$.

The Grothendieck group of $\mathcal{O}^{(\infty,\mathrm{fl})}_0$
is a free abelian group (resp. a free $\mathbb{Z}[v,v^{-1}]$-module
in the graded version) with basis $[L_w^{\infty}]$, where $w\in S_n$.

\begin{corollary}\label{cor79}
The action of projective functors on the graded version
${}^\mathbb{Z}\mathcal{C}^{\mathrm{res}}$ of 
$\mathcal{O}^{(\infty,\mathrm{fl})}_0$ decategorifies to 
the  $\mathbf{H}_\infty$-module $\hat{\mathbf{H}}_\infty$,
where $[L_w^{\infty}]$ corresponds to $\underline{\hat{H}}_w$.
\end{corollary}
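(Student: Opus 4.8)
The plan is to compute the class $[\theta_y^{(\infty)}L_x^{\infty}]$ in the Grothendieck group directly, using the equivalence $\Phi^{\mathrm{res}}$ of Theorem~\ref{thm77} together with the component-wise description of $\theta_y^{(\infty)}$ from Subsection~\ref{s4.5} (whose construction lifts verbatim to the graded setting of ${}^{\mathbb{Z}}\mathcal{C}^{\mathrm{res}}$), and then to match the outcome against the defining formula for the $\mathbf{H}_\infty$-action on $\hat{\mathbf{H}}_\infty$ from Subsection~\ref{s4.6}. Since each $\theta_y^{(\infty)}$ is exact by Lemma~\ref{lem74} and Proposition~\ref{prop75}, it induces a $\mathbb{Z}[v,v^{-1}]$-linear endomorphism of the Grothendieck group of ${}^{\mathbb{Z}}\mathcal{C}^{\mathrm{res}}$. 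Hence, under the identification $[L_w^{\infty}]\leftrightarrow\hat{\underline{H}}_w^{(\infty)}$, it suffices to verify on basis elements that $[\theta_y^{(\infty)}L_x^{\infty}]$ equals $\hat{\underline{H}}_x^{(\infty)}\underline{H}_y^{(\infty)}$.

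First I would transport $L_x^{\infty}$ through $\Phi^{\mathrm{res}}$: by construction its $i$-th component is $\mathbf{F}^\infty_i L_x^{\infty}$, which equals $L_x^{(i)}$ whenever $x\in S_i$ and vanishes otherwise. Since, for $i>\mathbf{m}(y)$, the functor $\theta_y^{(\infty)}$ acts on the $i$-th component simply as $\theta_y^{(i)}$, the object $\theta_y^{(\infty)}L_x^{\infty}$ has $i$-th component $\theta_y^{(i)}L_x^{(i)}$ for all $i\gg 0$. At each such finite level the graded categorification recalled in Subsection~\ref{s4.1} identifies $[\theta_y^{(i)}L_x^{(i)}]$ with $\hat{\underline{H}}_x^{(i)}\underline{H}_y^{(i)}$, that is, with $\sum_{w\in S_i}[\hat{\underline{H}}^{(i)}_x\underline{H}^{(i)}_y:\hat{\underline{H}}^{(i)}_w]\,[L_w^{(i)}]$. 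By Theorem~\ref{mainthm}\eqref{mainthm.1} these graded multiplicities are independent of $i$ once $i\gg 0$, and by Theorem~\ref{mainthm}\eqref{mainthm.2} only finitely many are nonzero, all indexed by $w$ in a fixed finite $S_N$. Passing to the limit should therefore yield the finite sum $[\theta_y^{(\infty)}L_x^{\infty}]=\sum_{w\in S_\infty}[\hat{\underline{H}}^{(\infty)}_x\underline{H}^{(\infty)}_y:\hat{\underline{H}}^{(\infty)}_w]\,[L_w^{\infty}]$, which is exactly $\hat{\underline{H}}_x^{(\infty)}\underline{H}_y^{(\infty)}$ as defined in Subsection~\ref{s4.6}.

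The step requiring the most care—and the one I expect to be the main obstacle—is the last one: the claim that graded composition multiplicities in the restricted limit ${}^{\mathbb{Z}}\mathcal{C}^{\mathrm{res}}$ are genuinely read off at a sufficiently high finite level. Here I would lean on the theory of shortening functors and restricted limits from \cite{EA15}: because the functors ${}^{\mathbb{Z}}\mathbf{F}^i_{i-1}$ are exact and send simples to simples or to zero, ${}^{\mathbb{Z}}\mathcal{C}^{\mathrm{res}}$ is a length category by \cite[Lemma~4.1.3]{EA15}, its simple objects are precisely the $\Phi^{\mathrm{res}}(L_w^{\infty})$, and the class of any object in its Grothendieck group is determined by its stabilized composition series. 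The finiteness in Theorem~\ref{mainthm}\eqref{mainthm.2} is exactly what guarantees that $\theta_y^{(\infty)}L_x^{\infty}$ really lies in this length category, so that the sum above is finite and the induced endomorphism is well defined. Granting this, the computation of the previous paragraph shows that the action of $\theta_y^{(\infty)}$ on $[L_x^{\infty}]$ coincides with right multiplication by $\underline{H}_y^{(\infty)}$ on $\hat{\underline{H}}_x^{(\infty)}$, which is the assertion of the corollary.
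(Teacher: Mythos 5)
Your proposal is correct and is essentially the paper's own argument: the paper's proof is the one-line statement that the corollary ``follows from the definitions and Theorem~\ref{mainthm}'', and your write-up simply unpacks exactly those ingredients (the component-wise action of $\theta_y^{(\infty)}$ from Subsection~\ref{s4.5}, the finite-rank decategorification of Subsection~\ref{s4.1}, the stabilization and finiteness from Theorem~\ref{mainthm}, and the definition of the module $\hat{\mathbf{H}}_\infty$ in Subsection~\ref{s4.6}). The care you take with reading off composition multiplicities at a finite level via the length-category property of ${}^{\mathbb{Z}}\mathcal{C}^{\mathrm{res}}$ from \cite{EA15} is precisely the implicit content of ``the definitions'' in the paper's proof.
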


\begin{proof}
This follows from the definitions and  Theorem~\ref{mainthm}.
\end{proof}

\subsection{Koszulity of 
${}^\mathbb{Z}\mathcal{C}^{\mathrm{res}}$}\label{s4.9}

The following answers \cite[Question~5.3.3]{CP} in the case of
the category ${}^\mathbb{Z}\mathcal{C}^{\mathrm{res}}$.
As usual, $\mathrm{ext}$ denotes homogeneous extensions of degere zero
and $\langle 1\rangle$ is the degree shift that sends degree $0$
to degree $-1$.

\begin{theorem}\label{thm-cp}
For $x,y\in S_\infty$ and $i,j\in\mathbb{Z}_{\geq  0}$, 
the inequality $i\neq j$ implies
\begin{displaymath}
\mathrm{ext}^{i}_{\mathcal{C}^{\mathrm{res}}}(L(x\cdot 0)^{(\infty)},L(y\cdot 0)^{(\infty)}\langle -j\rangle)=0. 
\end{displaymath}
\end{theorem}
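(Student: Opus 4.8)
The plan is to deduce the statement from the Koszulity of the finite principal blocks ${}^{\mathbb{Z}}\mathcal{O}_0^{(n)}$ proved in \cite{So0}, transporting it along the inverse system \eqref{eqn2} and controlling, via the combinatorics of Section~\ref{s3}, how much of the homological picture is already visible at a large but finite rank. Throughout write $L_x^{(\infty)}:=L(x\cdot 0)^{(\infty)}$. Using the equivalence of Theorem~\ref{thm77}, I would work with the exact truncation functors $\mathbf{F}^{\infty}_n\colon\mathcal{O}^{(\infty,\mathrm{fl})}_0\to\mathcal{O}_0^{(n)}$ (equivalently the projections ${}^{\mathbb{Z}}\mathcal{C}^{\mathrm{res}}\to{}^{\mathbb{Z}}\mathcal{O}_0^{(n)}$), which send $L_w^{(\infty)}$ to $L_w^{(n)}$ for $w\in S_n$ and to $0$ otherwise, and which satisfy $\mathbf{F}^n_{n-1}\circ\mathbf{F}^{\infty}_n=\mathbf{F}^{\infty}_{n-1}$. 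For fixed $x,y$ these functors organise the graded groups $\mathrm{ext}^{i}_{\mathcal{O}_0^{(n)}}(L_x^{(n)},L_y^{(n)}\langle -j\rangle)$ into an inverse system with transition maps $(\mathbf{F}^n_{n-1})_{*}$, and the first task is to compare $\mathrm{ext}^{i}_{\mathcal{C}^{\mathrm{res}}}(L_x^{(\infty)},L_y^{(\infty)}\langle -j\rangle)$ with this system.

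Since each $\mathbf{F}^{\infty}_n$ is exact and an extension in the restricted limit is exactly a compatible family of extensions in the $\mathcal{O}_0^{(n)}$, I would establish a Milnor-type exact sequence
\[
0\longrightarrow {\varprojlim_n}^{1}\,\mathrm{ext}^{i-1}_{\mathcal{O}_0^{(n)}}(L_x^{(n)},L_y^{(n)}\langle -j\rangle)\longrightarrow \mathrm{ext}^{i}_{\mathcal{C}^{\mathrm{res}}}(L_x^{(\infty)},L_y^{(\infty)}\langle -j\rangle)\longrightarrow \varprojlim_n\,\mathrm{ext}^{i}_{\mathcal{O}_0^{(n)}}(L_x^{(n)},L_y^{(n)}\langle -j\rangle)\longrightarrow 0,
\]
relying for $i=1$ on the description of $\mathcal{C}$ recalled before Lemma~\ref{lem74} and for higher $i$ on the derived-limit formalism for sequential inverse limits of length categories as in \cite[Section~4]{EA15}. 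By \cite{So0} the group $\mathrm{ext}^{i}_{\mathcal{O}_0^{(n)}}(L_x^{(n)},L_y^{(n)}\langle -j\rangle)$ vanishes for every $n$ as soon as $i\neq j$. Hence, for $i\neq j$, the right-hand term vanishes, and the only potentially nonzero contribution is the ${\varprojlim}^{1}$-term in the single case $i=j+1$, where it involves the diagonal groups $\mathrm{ext}^{j}_{\mathcal{O}_0^{(n)}}(L_x^{(n)},L_y^{(n)}\langle -j\rangle)$. It therefore suffices to prove that this inverse system of diagonal groups is Mittag-Leffler; I would in fact show that its transition maps are isomorphisms for all $n\gg 0$.

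The heart of the matter is this stabilization, and it is the homological counterpart of Theorem~\ref{mainthm}. The graded quiver of $\mathcal{O}_0^{(n)}$ has exactly $\mu(a,b)$ arrows from $a$ to $b$, i.e. $\dim\mathrm{ext}^{1}_{\mathcal{O}_0^{(n)}}(L_a^{(n)},L_b^{(n)}\langle -1\rangle)=\mu(a,b)$; consequently the simple modules occurring in homological degrees $\leq i$ of the minimal graded projective resolution of $L_x^{(n)}$ lie among those reachable from $x$ by a $\mu$-quiver path $x=a_0,a_1,\dots,a_t$ with $t\leq i$. Iterating Lemma~\ref{lem-2} along such a path — using $\mu(a_{r},a_{r+1})\neq 0$ together with either the descent-set inclusion it supplies or, when the descent sets coincide, the elementary bound for a single $\mu$-arrow — I would bound $\mathbf{m}(a_t)$ by a function of $\mathbf{m}(x)$ and $t$ alone, each step multiplying $\mathbf{m}$ by at most $2$. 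Thus all simples relevant to $\mathrm{ext}^{\leq i}(L_x^{(\infty)},L_y^{(\infty)}\langle\,\cdot\,\rangle)$ lie in $S_M$ for an explicit $M=M(i,\mathbf{m}(x),\mathbf{m}(y))$. For every $n>M$ the Serre kernel $\mathcal{X}_n$ of $\mathbf{F}^n_{n-1}$ contains none of these simples, so $\mathbf{F}^n_{n-1}$ carries the relevant initial segment of the minimal resolution of $L_x^{(n)}$ to that of $L_x^{(n-1)}$ and induces an isomorphism on $\mathrm{ext}^{\leq i}(L_x,L_y\langle\,\cdot\,\rangle)$; in particular these groups are independent of $n$ for $n>M$.

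Once the transition maps are isomorphisms for $n\gg 0$, the diagonal system is Mittag-Leffler, its ${\varprojlim}^{1}$ vanishes, and the Milnor sequence collapses to $\mathrm{ext}^{i}_{\mathcal{C}^{\mathrm{res}}}(L_x^{(\infty)},L_y^{(\infty)}\langle -j\rangle)\cong\varprojlim_n\mathrm{ext}^{i}_{\mathcal{O}_0^{(n)}}(L_x^{(n)},L_y^{(n)}\langle -j\rangle)$, which is $0$ for $i\neq j$ by Koszulity of the finite blocks; this proves the theorem. I expect the main obstacle to lie in the previous paragraph, in two places. First, turning the single-arrow estimate of Lemma~\ref{lem-2} into a uniform bound $\mathbf{m}(a_t)\leq 2^{t}\mathbf{m}(x)$ valid along every $\mu$-quiver path requires handling the arrows for which the hypothesis $\mathbf{rds}(a)\setminus\mathbf{rds}(b)\neq\varnothing$ fails; here I would exploit the symmetry $\mu(a,b)=\mu(b,a)$ and the Robinson-Schensted description of descent sets from Section~\ref{s3} to reduce to the case already covered. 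Second, I must justify that the truncation functor is homologically faithful in the stable range, i.e. that $\mathbf{F}^n_{n-1}$ sends $P_w^{(n)}$ to $P_w^{(n-1)}$ for $w\in S_{n-1}$ and annihilates the remaining indecomposable projectives; this compatibility of $\mathbf{F}^n_{n-1}$ with projective covers is precisely what makes the finite Koszul resolutions assemble into a linear resolution in the limit, and I would extract it from the explicit description of $\mathbf{F}^n_{n-1}$ in \cite{CMZ}.
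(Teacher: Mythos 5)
Your starting point (reduce everything to Soergel's Koszulity of the finite blocks \cite{So0} via the truncation functors) is also the paper's starting point, but the proposal has two genuine gaps, and the first one is fatal. The Milnor-type exact sequence relating $\mathrm{ext}^{i}_{\mathcal{C}^{\mathrm{res}}}$ to $\varprojlim_n$ and ${\varprojlim_n}^{1}$ of the groups $\mathrm{ext}^{\bullet}_{\mathcal{O}_0^{(n)}}$ is asserted, not proved, and it cannot be outsourced to \cite{EA15}: that reference constructs restricted inverse limits and shows they are length categories, but contains no derived-limit formalism for extension groups. This step is in fact the entire content of the theorem. The functors $\mathbf{F}^{\infty}_k$ and $\mathbf{F}^n_{n-1}$ are Serre quotient functors, and while one can check by hand that they behave well on $\Hom$ and on $\mathrm{ext}^1$ between objects all of whose composition factors survive, for $i\geq 2$ a Yoneda equivalence produced at level $k$ (which is what Koszulity of $\mathcal{O}_0^{(k)}$ gives you, via \cite[Theoreme~1, A X.121]{Bo}) lives entirely inside $\mathcal{O}_0^{(k)}$, and there is no general mechanism for lifting it back to $\mathcal{C}^{\mathrm{res}}$. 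Constructing that lift is exactly what the paper's proof does: it transports the level-$k$ diagram upward with the parabolic induction $\mathbf{I}_k^{\infty}$, then cuts the result back down into $\mathcal{C}^{\mathrm{res}}$ using the trace functor $\mathbf{G}$, whose compatibility with the diagram is the whole point of Lemma~\ref{lem-cp5}. Your proposal has no substitute for this. Note also that, had the Milnor sequence been available, your entire Mittag-Leffler discussion would be superfluous: all the groups $\mathrm{ext}^{i}_{\mathcal{O}_0^{(n)}}(L_x^{(n)},L_y^{(n)}\langle -j\rangle)$ are finite-dimensional, and a sequential inverse system of finite-dimensional vector spaces is automatically Mittag-Leffler, so every ${\varprojlim}^{1}$ term vanishes for free.

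The second gap is that the stabilization you call ``the heart of the matter'' is simply false: the homological data does \emph{not} stabilize, which is precisely why the paper avoids comparing $\mathrm{ext}$ groups across ranks. Concretely, Koszulity together with the graded Euler form (equivalently, Koszul self-duality of $\mathcal{O}_0^{(n)}$, or the graded BGG resolution of the trivial module) gives the Poincar\'{e} series $\sum_{i}\dim\mathrm{ext}^{i}_{\mathcal{O}_0^{(n)}}(L_e^{(n)},L_e^{(n)}\langle -i\rangle)\,v^{i}=\sum_{z\in S_n}v^{2\ell(z)}$, so $\dim\mathrm{ext}^{2}(L_e^{(n)},L_e^{(n)}\langle -2\rangle)=n-1$, which grows with $n$; the transition maps are not eventually isomorphisms. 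The failure is already visible in degree one: $\dim\mathrm{ext}^{1}(L_e^{(n)},L_{s_k}^{(n)}\langle -1\rangle)=\mu(e,s_k)=1$ for \emph{every} $k<n$, so no bound $M(i,\mathbf{m}(x),\mathbf{m}(y))$ on the support of the resolution can exist, and no appeal to $\mu(a,b)=\mu(b,a)$ can repair this, because $\mathbf{rds}(e)=\varnothing$ and these arrows genuinely occur --- Lemma~\ref{lem-2} (and Theorem~\ref{mainthm}) only controls $\mu(x,u)$ when the relevant descent condition holds, which is guaranteed in Formula~\eqref{eq2} but not along arbitrary quiver paths. Your supporting claim about projectives also fails in its second half: $\mathbf{F}^n_{n-1}$ does send $P_w^{(n)}$ to $P_w^{(n-1)}$ for $w\in S_{n-1}$, but it does not annihilate the remaining ones; for instance $P_{s_2}^{(3)}$ has Verma flag $M_e^{(3)},M_{s_2}^{(3)}$, so $\mathbf{F}^3_2 P_{s_2}^{(3)}\cong M_e^{(2)}=P_e^{(2)}\neq 0$. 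Hence $\mathbf{F}^n_{n-1}$ applied to a minimal projective resolution yields a projective resolution that is no longer minimal, and the extra summands account exactly for the growth of the $\mathrm{ext}$ groups. In short, both pillars of the proposal --- the limit formalism and the stabilization --- are unproved or false, so the proposal does not establish the theorem.
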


\begin{proof}
For brevity, we write $L_x$ and $L_y$ for 
$L(x\cdot 0)^{(\infty)}$ and $L(y\cdot 0)^{(\infty)}$, respectively.
We are going to prove the necessary extension vanishing using the 
interpretation of extensions as equivalence classes of exact sequences.
 
Consider two exact sequences 
\begin{equation}\label{eq-cp1}
\xymatrix{
0\ar[r]&L_y\langle -j\rangle\ar[r]&X_1\ar[r]&X_2\ar[r]&\dots\ar[r]& X_i\ar[r]& L_x\ar[r]& 0
}
\end{equation}
and
\begin{equation}\label{eq-cp2}
\xymatrix{
0\ar[r]&L_y\langle -j\rangle\ar[r]&Y_1\ar[r]&Y_2\ar[r]&\dots\ar[r]& Y_i\ar[r]& L_x\ar[r]& 0
}
\end{equation}
in ${}^\mathbb{Z}\mathcal{C}^{\mathrm{res}}$.
Choose $k$ such that the indices of all simple subquotients of all modules involved
in these two sequences belong to $S_k$. Such $k$ exists as all $X_s$ and all $Y_t$
involved have finite length. Then, applying the exact functor $\mathbf{F}^\infty_k$
to \eqref{eq-cp1} and \eqref{eq-cp2} gives two exact sequences
\begin{displaymath}
\xymatrix@C=7mm{
0\ar[r]&\mathbf{F}^\infty_kL_y\langle -j\rangle\ar[r]&
\mathbf{F}^\infty_kX_1\ar[r]&\dots\ar[r]& 
\mathbf{F}^\infty_kX_i\ar[r]& 
\mathbf{F}^\infty_kL_x\ar[r]& 0
}
\end{displaymath}
and
\begin{displaymath}
\xymatrix@C=7mm{
0\ar[r]&\mathbf{F}^\infty_kL_y\langle -j\rangle\ar[r]&
\mathbf{F}^\infty_kY_1\ar[r]&\dots\ar[r]& 
\mathbf{F}^\infty_kY_i\ar[r]& 
\mathbf{F}^\infty_kL_x\ar[r]& 0.
}
\end{displaymath}
As $\mathcal{O}_0^{(k)}$ is Koszul, see \cite{So0},
we know that 
$\mathrm{ext}^{i}_{\mathcal{O}_0^{(k)}}(L_x^{(k)},L_y^{(k)}\langle -j\rangle)=0$.
Therefore, by \cite[Theoreme~1, A X.121]{Bo}, there exists a commutative diagram
\begin{equation}\label{eq-cp5}
\xymatrix@C=7mm@R=7mm{
0\ar[r]&\mathbf{F}^\infty_kL_y\langle -j\rangle\ar[r]\ar@{=}[d]&
\mathbf{F}^\infty_kY_1\ar[r]\ar[d]&\dots\ar[r]& 
\mathbf{F}^\infty_kY_i\ar[r]\ar[d]& 
\mathbf{F}^\infty_kL_x\ar[r]\ar@{=}[d]& 0\\
0\ar[r]&\mathbf{F}^\infty_kL_y\langle -j\rangle\ar[r]&
Z_1\ar[r]&\dots\ar[r]& 
Z_i\ar[r]& 
\mathbf{F}^\infty_kL_x\ar[r]& 0\\
0\ar[r]&\mathbf{F}^\infty_kL_y\langle -j\rangle\ar[r]\ar@{=}[u]&
\mathbf{F}^\infty_kX_1\ar[r]\ar[u]&\dots\ar[r]& 
\mathbf{F}^\infty_kX_i\ar[r]\ar[u]& 
\mathbf{F}^\infty_kL_x\ar[r]\ar@{=}[u]& 0\\
}
\end{equation}
over ${}^{\mathbb{Z}}\mathcal{O}_0^{(k)}$ with exact rows. Applying to 
\eqref{eq-cp5} the exact parabolic induction $\mathbf{I}_k^{\infty}$ from $\mathfrak{sl}_k$ to
$\mathfrak{sl}_{\infty}$ (this induction is, as usual, left adjoint to 
$\mathbf{F}_k^{\infty}$) outputs a commutative diagram
\begin{equation}\label{eq-cp6}
\xymatrix@C=5mm@R=5mm{
0\ar[r]&\mathbf{I}_k^{\infty}\mathbf{F}^\infty_kL_y\langle -j\rangle\ar[r]\ar@{=}[d]&
\mathbf{I}_k^{\infty}\mathbf{F}^\infty_kY_1\ar[r]\ar[d]&\dots\ar[r]& 
\mathbf{I}_k^{\infty}\mathbf{F}^\infty_kY_i\ar[r]\ar[d]& 
\mathbf{I}_k^{\infty}\mathbf{F}^\infty_kL_x\ar[r]\ar@{=}[d]& 0\\
0\ar[r]&\mathbf{I}_k^{\infty}\mathbf{F}^\infty_kL_y\langle -j\rangle\ar[r]&
\mathbf{I}_k^{\infty}Z_1\ar[r]&\dots\ar[r]& 
\mathbf{I}_k^{\infty}Z_i\ar[r]& 
\mathbf{I}_k^{\infty}\mathbf{F}^\infty_kL_x\ar[r]& 0\\
0\ar[r]&\mathbf{I}_k^{\infty}\mathbf{F}^\infty_kL_y\langle -j\rangle\ar[r]\ar@{=}[u]&
\mathbf{I}_k^{\infty}\mathbf{F}^\infty_kX_1\ar[r]\ar[u]&\dots\ar[r]& 
\mathbf{I}_k^{\infty}\mathbf{F}^\infty_kX_i\ar[r]\ar[u]& 
\mathbf{I}_k^{\infty}\mathbf{F}^\infty_kL_x\ar[r]\ar@{=}[u]& 0\\
}
\end{equation}
over ${}^{\mathbb{Z}}\mathcal{O}_0^{(\infty)}$ with exact rows. 

Set $\Lambda:=S_\infty\setminus S_k$.
Let $\mathbf{G}$ be the functor of taking the trace
of all projectives $P(w\cdot 0)^{(\infty)}$, where $w\in \Lambda$, in
a given module. Note that, in general, this functor is neither left nor
right exact, despite of the fact that it preserves both monomorphisms
and epimorphisms, see \cite{GM}. 

\begin{lemma}\label{lem-cp5}
Applying  $\mathbf{G}$ to \eqref{eq-cp6}, outputs a 
commutative diagram with exact rows.
\end{lemma}

\begin{proof}
We only need to prove that $\mathbf{G}$, applied to a row of \eqref{eq-cp6},
outputs an exact sequence.
Splitting longer exact sequences into shorter, it is enough to
prove the claim in the special case $i=1$, that is for short
exact sequences. If the original short exact sequence splits,
the claim is clear as each $\mathbf{I}_k^{\infty}L_w^{(k)}$
has simple top $L(w\cdot 0)^{\infty}$ and all other subquotients
are indexed by elements of $\Lambda$. Hence 
the application of $\mathbf{G}$ sends $\mathbf{I}_k^{\infty}L_w^{(k)}$
to its radical. In particular, $\mathbf{G}$ acts on the whole
short exact sequence as taking the radical.

The case when the original short exact sequence does not split
corresponds to non-vanishing of some $\mu(u,w)$. Since the value
of $\mu(u,w)$ does not depend on $k$, for all $k$ for which the value makes 
sense, the non-split extension of some 
$L_w^{(k)}$ and $L_u^{(k)}\langle -1\rangle$
gives a non-split extension of $L(w\cdot 0)^{\infty}$ and 
$L(u\cdot 0)^{\infty}\langle -1\rangle$ which is realized in the middle term of
our short exact sequence with end terms $\mathbf{I}_k^{\infty}L_w^{(k)}$ 
and $\mathbf{I}_k^{\infty}L_u^{(k)}\langle -1\rangle$ by the same arguments as
in the proof of Proposition~\ref{prop71}. This implies that the
application of $\mathbf{G}$ to the non-split extension of
$\mathbf{I}_k^{\infty}L_w^{(k)}$ and $\mathbf{I}_k^{\infty}L_u^{(k)}\langle -1\rangle$
outputs the extensions of the radicals of these two modules.
The claim of the lemma follows.
\end{proof}

Given Lemma~\ref{lem-cp5}, we can take the component-wise
quotient of \eqref{eq-cp6} by the subdiagram obtained by
applying $\mathbf{G}$ to \eqref{eq-cp6}. This will now be
a commutative diagram with exact rows over ${}^\mathbb{Z}\mathcal{C}^{\mathrm{res}}$.
Evaluating the adjunction morphism for the adjoint pair
$(\mathbf{I}_k^{\infty},\mathbf{F}_k^{\infty})$ and comparing characters, we 
obtain that the first row of the resulting diagram 
is isomorphic to \eqref{eq-cp1} while the third row  is isomorphic to \eqref{eq-cp2}.
By \cite[Theoreme~1, A X.121]{Bo}, we thus obtain that 
the images of  \eqref{eq-cp1} and \eqref{eq-cp2} in 
$\mathrm{ext}^{i}_{\mathcal{C}^{\mathrm{res}}}(L_x,L_y\langle -j\rangle)$
must coincide. Since \eqref{eq-cp1} and \eqref{eq-cp2} were arbitrary,
we obtain that this extension space is zero. This completes the proof.
\end{proof}

\vspace{2mm}

\noindent
Department of Mathematics, Uppsala University, Box. 480,
SE-75106, Uppsala, \\ SWEDEN, 
emails:
{\tt samuel.creedon\symbol{64}math.uu.se}\hspace{5mm}
{\tt mazor\symbol{64}math.uu.se}

\end{document}